\def\l{\langle} \def\r{\rangle}
\def\b{\beta}
\def\a{\alpha}
\newcommand\Aut{\mathrm{Aut}}
\newcommand\Cay{\mathrm{Cay}}
\newcommand\D{\mathrm{D}}
\newcommand\V{\varepsilon}
\newtheorem{theorem}{Theorem}[section]
\newtheorem{lemma}{Lemma}[section]
\title{\bf \Large  The number of rooted spanning forests of bicirculant graphs \footnote{  The research of Feng was supported by  NSFC (Nos. 12271527, 12071484). T. Wu was supported by NSFC (No. 12261071) and NSF of Qinghai Province (No. 2020-ZJ-920). E-mail addresses: yjlisy@163.com (J. Yang), fenglh@163.com (L. Feng, corresponding author),  lrr999a@163.com (R. Lu), mathtzwu@163.com (T. Wu).}}
\author{
{\small  Jing Yang$^a$, \ \ Lihua Feng$^b$$\dagger$, \ \ Rongrong Lu$^b$,\ \ Tingzeng Wu$^{c,d}$}\\[2mm]
\small  $^a$School of Statistics and Mathematics, Yunnan University of Finance and Economics \\  
    \small   Kunming, Yunnan, 650000, China.\\
\small $^b$School of Mathematics and Statistics, HNP-LAMA, Central South University\\
 \small Changsha, Hunan, 410083, China\\
\small   $^c$School of Mathematics and Statistics, Qinghai Nationalities University\\
\small  Xining, Qinghai, 810007,  China\\
\small $^d$Qinghai Institute of Applied Mathematics, Xining, Qinghai, 810007,  China 
 }
\begin{document}
\maketitle
\begin{abstract}
 A bi-Cayley graph over the cyclic group $(\mathbb{Z}_n, +)$ is called a bicirculant graph. Let
 $\Gamma=BC(\mathbb{Z}_n; R,T,S)$ be a bicirculant graph with $R=-R\subseteq \mathbb{Z}_n\setminus \{0\}$ and $T={-}T\subseteq \mathbb{Z}_n\setminus \{0\}$ and $S\subseteq \mathbb{Z}_n$. In this paper, using Chebyshev polynomials, we obtain a closed formula
  for the number of rooted spanning forests of $\Gamma$. Moreover, we investigate some arithmetic properties of the number of rooted spanning forests of $\Gamma$, and find its asymptotic behaviour as   $n$ tends infinity.\\

\noindent{\bf AMS classification}: 05C30; 05C05\\

\noindent {\bf Keywords}: Bicirculant graph; Rooted spanning forest; Chebyshev polynomial
\end{abstract}

\section{Introduction}

All graphs considered in this paper are undirected, simple and connected.
Let $\Gamma=(V(\Gamma), E(\Gamma))$ be a graph with vertex set $V(\Gamma)=\{1,2, \ldots, n\}$ and edge set $E(\Gamma)$. The \textbf{adjacency matrix} of $\Gamma$, denoted by $A(\Gamma)=\left[a_{i j}\right]_{V(\Gamma)\times V(\Gamma)}$, where
$$
a_{i j}= \begin{cases}1, & \text { if } i \text { and } j \text { are adjacent in } \Gamma, \\ 0, & \text { otherwise. }\end{cases}
$$
The \textbf{Laplacian matrix} $L(\Gamma$) of $\Gamma$ is $L(\Gamma)=D(\Gamma)-A(\Gamma)$, where $D(\Gamma)={\rm diag} (d_1, d_2,\dots, d_n)$ is the diagonal matrix of vertex degrees. The eigenvalues   of $A(\Gamma)$ and $L(\Gamma)$ are called the
\textbf{ eigenvalues}  and \textbf{Laplacian eigenvalues} of $\Gamma$, respectively.

For a finite group $G$ with identity $e$, the Cayley digraph $\Cay(G, S)$ over $G$ with respect to $S\subseteq G\setminus\{e\}$ is a digraph with vertex set $V(\Cay(G, S))=G$  and edge
set $E(\Cay(G, S))=\{(x,y)\mid x,y\in G, yx^{-1}\in S\}$.  A Cayley digraph $\Cay(G, S)$
satisfying $S = S^{-1}$ is called a \textbf{Cayley graph}. Equivalently, a graph $\Gamma$ is a Cayley
graph if and only if its automorphism group $\Aut(\Gamma)$ contains a regular subgroup.
$\Cay(G, S)$ is connected if and only if $S$ generates $G$. It is
well known that $\Cay(G, S)$
is vertex transitive and $G$ acts regularly on $\Cay(G, S)$ as an automorphism group with
one orbit. 
A graph is said to be a \textbf{bi-Cayley graph}(some authors also use the term \textbf{semi-Cayley graph} \cite{GX,GX2,sem,cay})  over a group $G$ if it admits $G$ as
a semiregular automorphism group with two orbits of equal size. Every bi-Cayley graph admits
the following concrete realization (see \cite[Lemma 2.1]{cay}). Let $R$, $T$ and $S$ be subsets of a group $G$
such that $R = R^{-1}$, $T = T^{-1}$ and $R\cup T$ does not contain the identity element of $G$. Define the
graph $BC(G; R, T, S)$ to have vertex set the union of the right part $G_0= \{g_0 \mid g\in G\}$ and
the left part $G_1 = \{g_1 \mid g\in G\}$, and edge set the union of the right edges $\{{h_0, g_0} \mid gh^{-1}\in R\}$,
the left edges $\{{h_1, g_1} \mid gh^{-1}\in T\}$ and the spokes $\{{h_0, g_1} \mid gh^{-1}\in S\}$. 
To be brief, we shall say that a Cayley (resp. bi-Cayley) graph on
a cyclic group a \textbf{circulant graph} (resp. \textbf{bicirculant graph}).

Let
$$
P(x)=a_0+a_1x+\cdots+a_sx^s=a_s \prod_{k=1}^s\left(x-\alpha_k\right)
$$
 be a nonconstant polynomial with complex coefficients.
The \textbf{Mahler measure} \cite{mah} of $P(x)$ is defined to be
\begin{equation}
M(P):=\exp \left(\int_0^1 \log \left|P\left(e^{2 \pi\mathbf{i} t}\right)\right| d t\right).
\end{equation}
Actually, an alternative form of $M(P)$ has appeared in \cite{leh}.
That is,
\begin{equation}
M(P)=\left|a_s\right| \prod_{\left|\alpha_i\right|>1}\left|\alpha_i\right|.
\end{equation}
The concept of Mahler measure can be naturally extended to the class of \textbf{Laurent polynomials}
$$P(x)=a_0 x^t+a_1 x^{t+1}+\cdots+a_{s-1} x^{t+s-1}+a_s x^{t+s}=a_s x^t \prod_{i=1}^s\left(x-\alpha_i\right),$$
 where $a_s \neq 0$, and $t$ is an arbitrary but not necessarily positive integer.

For a graph $\Gamma$ with $n$ vertices, a \textbf{spanning forest} $\Gamma_1$ of $\Gamma$ is an acyclic subgraph that contains all vertices
 of $\Gamma$.  A spanning forest $\Gamma_1$ is
 called \textbf{rooted} if any tree in $\Gamma_1$ has a root, that is a labeled vertex.
 Connected spanning forest is a \textbf{spanning tree}.
The number of spanning trees (also known as the complexity of graphs) and the number of rooted spanning forests in a connected graph $\Gamma$ are denoted by $\tau_{\Gamma}(n)$ and $f_{\Gamma}(n)$, respectively.
For the invariant $\tau_{\Gamma}(n)$, the famous   Matrix-Tree Theorem states that the number of spanning trees
in a graph can be expressed as the product of its non-zero Laplacian eigenvalues divided
by the number of vertices. For the invariant $f_{\Gamma}(n)$,  the classical result \cite{AK} shows that the rooted spanning forests in the graph can be found with the use of determinant of the matrix $\text{det}(I_n+L(\Gamma))$, where $I_n$ is the identity matrix of size $n$.
 This leads to two problems: For the invariant $\tau_{\Gamma}(n)$, how to find the product of non-zero eigenvalues of the matrix $L(\Gamma)$?  For the invariant $f_{\Gamma}(n)$, how to find the product of eigenvalues of the matrix $I_n +L(\Gamma)$? If the number of vertices of a graph is small, it is easy to handle. However, when $n$ tends infinity, calculating these products directly becomes a significantly complex problem.

For the complexity of graphs, many graph classes have been studied,
including complete multipartite graphs \cite{AlmostM1,AlmostM2}, almost complete graphs \cite{M1}, dihedrant graphs \cite{hua}, wheels \cite{wheel}, fans \cite{fan}, prisms \cite{prism}, ladders \cite{ladder}, M\"obius ladders \cite{ladder1}, lattices \cite{latt}, anti-prisms \cite{ati}, complete prisms \cite{ati1}, Sierpinski gaskets \cite{sie1,sie2}, and grids \cite{grid}.
Some researchers had studied this invariant of graphs from various perspectives.
In 1986, Boesch and Prodinger \cite{wheel} were the first to propose using Chebyshev polynomials to analyze the complexity of graphs, and this idea was subsequently put into practice. By employing  their approach, the complexity of circulant graphs have been investigated in \cite{cir2,cir3,cir4}. In 2012, Guttmann et al. \cite{AJ} investigated  the asymptotic behavior of the complexity for some families of graphs
from the point of view of the Malher measure.

In addition to calculating the number of spanning trees of a given graph, one might also be interested in determining the number of rooted spanning forests of the graph, see \cite{PE,OO,yc}.

Recently, Mednykh et al. had achieved a series of significant accomplishments for $\tau_{\Gamma}(n)$ and $f_{\Gamma}(n)$
\cite{MMA3, YA,OO3,MMA1,MMA2,III,OO1,med},
particularly in circulant graphs \cite{MMA1,MMA2}. They developed a new method to produce explicit formulas
as well as the asymptotic formulas for $\tau_{\Gamma}(n)$  and $f_{\Gamma}(n)$ through the Mahler measure of the associated Laurent polynomial.

Inspired by the work of Mednykh et al. \cite{MMA2}, in this paper, we obtain a closed formula for the number of rooted spanning
forests of bicirculant graphs, denoted by $f_\Gamma(2n)$, in light of Chebyshev polynomials, and investigate its arithmetic properties and asymptotic
behaviour.

The paper is organized as follows. In Section 2, some basic definitions and technical results are given.
In Section 3, we obtain  a closed formula for rooted spanning forests of bicirculant graphs.
 They will be given in light of Chebyshev polynomials.
 In Section 4, we provide some arithmetic properties of the number of rooted spanning forests.
In Section 5, we use explicit formulas for $f_{\Gamma}(2n)$ in order to produce its asymptotic in light of Mahler measure of the associated
polynomials. In Section 6, we illustrate our results by a series of examples.
\section{Preliminaries}
In this section, we list some   results that are helpful in the sequel.
Let $\mathbb{F}$ be a number field, and $\mathbb{F}^{m\times n}$ be the set of $m\times n$ matrices over $\mathbb{F}$.

 \begin{lemma}\label{mat}\cite{GX}
 Let $A, B, C, D\in\mathbb{F}^{n\times n}$ with $AC=CA$. Then
 $$ {\rm det}
 \begin{pmatrix}
 A&B\\
 C&D
 \end{pmatrix}
={\rm det}(AD-CB).
$$
\end{lemma}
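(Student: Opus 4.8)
The plan is to reduce to the case where the block $A$ is invertible and then pass to the general case by a polynomial-identity argument, exploiting the fact that a number field $\mathbb{F}$ is infinite. Write $M=\begin{pmatrix} A & B \\ C & D \end{pmatrix}$. First I would assume $A$ is nonsingular and use the block $LU$-factorization
$$
\begin{pmatrix} A & B \\ C & D \end{pmatrix}
=
\begin{pmatrix} I & 0 \\ CA^{-1} & I \end{pmatrix}
\begin{pmatrix} A & B \\ 0 & D - CA^{-1}B \end{pmatrix},
$$
whose first factor is unipotent (determinant $1$) and whose second is block upper-triangular. Taking determinants gives
$$
{\rm det}(M)={\rm det}(A)\,{\rm det}\!\left(D - CA^{-1}B\right)={\rm det}\!\left(A\left(D-CA^{-1}B\right)\right)={\rm det}\!\left(AD - ACA^{-1}B\right).
$$

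The key step is to invoke the hypothesis $AC=CA$. Since $A$ is invertible, commutativity yields $ACA^{-1}=CAA^{-1}=C$, so that $AD-ACA^{-1}B=AD-CB$, and hence the desired identity ${\rm det}(M)={\rm det}(AD-CB)$ holds whenever $A$ is nonsingular.

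The main obstacle is the case of singular $A$, which I would handle by perturbation. For an indeterminate $x$, set $A_x=A+xI$; since $I$ commutes with every matrix, $A_xC=AC+xC=CA+xC=CA_x$, so the commutativity hypothesis is preserved. Moreover $A_x$ is invertible for all but the finitely many values of $x$ equal to the negatives of the eigenvalues of $A$, so the case already established applies and gives
$$
{\rm det}\begin{pmatrix} A_x & B \\ C & D \end{pmatrix}={\rm det}(A_xD - CB)
$$
for all such $x$. Both sides are polynomials in $x$ with coefficients in $\mathbb{F}$, and they agree at infinitely many points; here the assumption that $\mathbb{F}$ is a number field, hence infinite, is essential, as it forces the two polynomials to coincide identically. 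Evaluating at $x=0$ then yields ${\rm det}(M)={\rm det}(AD-CB)$ for arbitrary $A$, completing the argument.
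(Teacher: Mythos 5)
Your proof is correct, but note that the paper itself offers no proof of Lemma~\ref{mat} to compare against: it is stated as a quoted result, cited from the reference [GX] (Gao--Luo). Your argument is the standard self-contained one for this classical fact: the block factorization
$$
\begin{pmatrix} A & B \\ C & D \end{pmatrix}
=
\begin{pmatrix} I & 0 \\ CA^{-1} & I \end{pmatrix}
\begin{pmatrix} A & B \\ 0 & D - CA^{-1}B \end{pmatrix}
$$
is valid when $A$ is invertible, the determinant computation $\det(M)=\det\bigl(AD-ACA^{-1}B\bigr)$ is right, and the hypothesis $AC=CA$ is used exactly where it must be, to replace $ACA^{-1}$ by $C$ (which is also why the conclusion reads $AD-CB$ rather than $AD-BC$). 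The perturbation step is sound as well; the only cosmetic point is that the exceptional values of $x$ are the roots in $\mathbb{F}$ of the monic polynomial $\det(A+xI)$, which need not be ``negatives of eigenvalues'' lying in $\mathbb{F}$ --- but all that matters is that there are at most $n$ of them, so your polynomial-identity argument over the infinite field $\mathbb{F}$ goes through. An alternative that removes even the need for $\mathbb{F}$ to be infinite is to run the invertible case over the rational function field $\mathbb{F}(x)$, where $A+xI$ is automatically invertible, and then compare coefficients in $\mathbb{F}[x]$ before evaluating at $x=0$; this makes the lemma valid over any field, though for the paper's purposes (a number field) your version suffices.
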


A \textbf{circulant matrix} matrix $Y$ is of the shape
$$
Y=\operatorname{circ}(y_1, y_2, \ldots, y_n)=
 \begin{pmatrix}
y_1 & y_2 & y_3 & \cdots & y_n \\
y_n & y_1 & y_2 & \cdots & y_{n-1} \\
y_{n-1} & y_n & y_1 & \cdots & y_{n-2} \\
\vdots & \vdots & \vdots & \ddots & \vdots \\
y_2 & y_3 & y_4 & \cdots & y_1
 \end{pmatrix}.
$$
 Recall that (see \cite{PJ}) the eigenvalues of
$Y$ are
 $\lambda_j=g(\V_n^j)$, $j=0,1,2,\ldots,n-1$,
where $\V_n=\exp(2\pi \textbf{i}/n)$  and $g(x)=y_1+y_2x+\cdots+y_nx^{n-1}$.
Let $\Lambda=\Cay(\mathbb{Z}_n,\{\pm s_1,\pm s_2,\ldots,\pm s_\ell\})$ be a circulant graph.
Note that the vertex $i$ is adjacent to the vertices $i\pm s_1,i\pm s_2,\ldots, i\pm s_\ell~(\bmod~n)$.
Then the adjacency matrix of $\Lambda$ is $\sum_{j=1}^{\ell}\left(Q^{s_j}+Q^{-s_j}\right)$,
where $Q=\operatorname{circ}(\underbrace{0,1,0,\ldots,0}_{n})$.
Let $\Gamma=BC(\mathbb{Z}_n;R,T,S)$ be a bicirculant graph. By \cite[Lemma 3.1]{GX},
the adjacency matrix of $\Gamma$ is
$D=\begin{pmatrix}
A & C \\
C^{\top} & B
\end{pmatrix}$,
where $A, B, C$ are the adjacency matrices of $\Cay(\mathbb{Z}_n,R), \Cay(\mathbb{Z}_n,T), \Cay(\mathbb{Z}_n,S)$, respectively,
and $C^{\top}$ means the transpose of $C$. Based on the above facts, we can obtain the following result directly.

\begin{lemma}\label{adj}
Let $R=-R=\{\ell_1, \ell_2\ldots, \ell_r\} \subseteq \mathbb{Z}_n\setminus \{0\}$,
$T=-T=\{m_1, m_2,\ldots, m_t\}\subseteq \mathbb{Z}_n \setminus \{0\}$ and
$S=\{u_1,u_2\ldots, u_s\}\subseteq \mathbb{Z}_n$.
Then the adjacency matrix of $\Gamma=BC(\mathbb{Z}_n; R,S,T)$ is
$$
A(\Gamma)=\begin{pmatrix}
 \sum_{j=1}^{r}Q^{\ell_j} & \sum_{j=1}^{s}Q^{u_j}\\
 \sum_{j=1}^{s}Q^{-u_j} & \sum_{j=1}^{t}Q^{m_j}
 \end{pmatrix},
$$
where $Q=\operatorname{circ}(\underbrace{0,1,0,\ldots,0}_{n})$.
\end{lemma}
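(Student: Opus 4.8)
The plan is to specialize the general block decomposition of a bi-Cayley adjacency matrix, recalled above from \cite[Lemma 3.1]{GX}, to the cyclic group $\mathbb{Z}_n$, and then to rewrite each block explicitly in terms of the cyclic shift matrix $Q = \operatorname{circ}(0,1,0,\ldots,0)$. Writing the adjacency matrix in the block form $\begin{pmatrix} A & C \\ C^\top & B \end{pmatrix}$ from that lemma, where $A, B, C$ are the adjacency matrices of $\Cay(\mathbb{Z}_n, R)$, $\Cay(\mathbb{Z}_n, T)$ and $\Cay(\mathbb{Z}_n, S)$, it suffices to identify $A = \sum_{j=1}^{r} Q^{\ell_j}$, $B = \sum_{j=1}^{t} Q^{m_j}$ and $C = \sum_{j=1}^{s} Q^{u_j}$, after which the bottom-left block follows from a single transpose identity.

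First I would record the combinatorial meaning of the powers of $Q$. Since $Q$ is the permutation matrix of the cyclic shift $i \mapsto i+1 \pmod{n}$, its $k$-th power satisfies $(Q^k)_{i,j} = 1$ exactly when $j - i \equiv k \pmod{n}$, and $(Q^k)_{i,j} = 0$ otherwise. Consequently, for any subset $X \subseteq \mathbb{Z}_n$ the matrix $\sum_{x \in X} Q^x$ has a $1$ in position $(i,j)$ precisely when $j - i \in X$, which is the defining incidence pattern of the Cayley (di)graph $\Cay(\mathbb{Z}_n, X)$; hence its adjacency matrix is $\sum_{x \in X} Q^x$.

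Applying this with $X = R$ and $X = T$ yields $A = \sum_{j=1}^{r} Q^{\ell_j}$ and $B = \sum_{j=1}^{t} Q^{m_j}$ immediately. Because $R = -R$ and $T = -T$, these two blocks are symmetric: pairing each $\ell \in R$ with $-\ell \in R$ and using $(Q^\ell)^\top = Q^{-\ell}$ gives $A^\top = A$, consistent with $\Cay(\mathbb{Z}_n, R)$ being an undirected graph, and likewise for $B$. For the spokes I take $X = S$, so that $C = \sum_{j=1}^{s} Q^{u_j}$; no symmetry is assumed here, so $C$ is in general only the adjacency matrix of a digraph. The bottom-left block is then $C^\top = \sum_{j=1}^{s} (Q^{u_j})^\top = \sum_{j=1}^{s} Q^{-u_j}$, where the identity $Q^\top = Q^{-1}$ (valid since $Q$ is a permutation matrix) is applied termwise. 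Substituting these expressions into the block form gives exactly the stated matrix.

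The argument is essentially bookkeeping rather than a genuine obstacle, since \cite[Lemma 3.1]{GX} already supplies the block structure. The only point demanding care is the orientation convention for the spokes, which fixes whether the powers $Q^{u_j}$ occupy the upper-right block and their inverses $Q^{-u_j}$ the lower-left block. Once the rule from the definition of $BC(\mathbb{Z}_n; R, T, S)$ is read off consistently, namely that a right vertex $h_0$ is joined to a left vertex $g_1$ exactly when $g - h \in S$, the placement of $C$ and $C^\top$ is forced and the formula follows.
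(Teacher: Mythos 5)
Your proposal is correct and follows exactly the route the paper intends: the paper gives no separate proof of Lemma~\ref{adj}, stating only that it follows ``directly'' from \cite[Lemma 3.1]{GX} together with the identification of circulant adjacency matrices as sums of powers of $Q$, which is precisely the bookkeeping you carry out (including the transpose identity $Q^{\top}=Q^{-1}$ for the lower-left block). No discrepancies to report.
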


Let $T_n(w) = \cos(n\arccos w)$ be the Chebyshev polynomial of the first kind.
The following lemma provides basic properties of $T_n(w)$.

\begin{lemma}\label{che}\cite{med}
 Let $w=\frac{1}{2}\left(z+\frac{1}{z}\right)$ and $T_n(w)$ be the Chebyshev polynomial of the first kind. Then
 $$T_n(w)=\frac{1}{2}\left(z^n+\frac{1}{z^n}\right),$$
 where $z\in \mathbb{C}\setminus \{0\}$ and $n$ is a positive integer.
\end{lemma}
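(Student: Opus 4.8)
The plan is to prove the identity by induction on $n$, using the defining three-term recurrence of the Chebyshev polynomials rather than the trigonometric formula $T_n(w)=\cos(n\arccos w)$ directly. The reason is that $\cos(n\arccos w)$ is only transparently meaningful for $w\in[-1,1]$, whereas the statement asserts the identity for all nonzero complex $z$ (hence for all complex $w$ of the form $\tfrac12(z+z^{-1})$); working with the recurrence, which exhibits $T_n$ as a genuine polynomial, avoids any appeal to analytic continuation.

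First I would record the standard facts $T_0(w)=1$, $T_1(w)=w$, and $T_{n+1}(w)=2w\,T_n(w)-T_{n-1}(w)$ for $n\ge1$, the last of which follows from the addition formula $\cos((n+1)\theta)+\cos((n-1)\theta)=2\cos\theta\cos(n\theta)$ and simultaneously confirms that each $T_n$ is a polynomial in $w$. Setting $U_n(z):=\tfrac12\!\left(z^n+z^{-n}\right)$ for the claimed right-hand side, the base cases are immediate: with $w=\tfrac12(z+z^{-1})$ one has $U_1(z)=\tfrac12(z+z^{-1})=w=T_1(w)$, and likewise $U_0(z)=1=T_0(w)$ if one wishes to start the induction at $n=0$.

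The inductive step reduces to the single computation
$$
2w\,U_n(z)=\bigl(z+z^{-1}\bigr)\cdot\tfrac12\bigl(z^n+z^{-n}\bigr)=\tfrac12\bigl(z^{\,n+1}+z^{-(n+1)}\bigr)+\tfrac12\bigl(z^{\,n-1}+z^{-(n-1)}\bigr)=U_{n+1}(z)+U_{n-1}(z),
$$
so that $U_{n+1}(z)=2w\,U_n(z)-U_{n-1}(z)$. Thus $U_n$ obeys exactly the same recurrence and the same two initial values as $T_n(w)$, and induction yields $T_n(w)=U_n(z)$ for every positive integer $n$ and every $z\in\mathbb{C}\setminus\{0\}$. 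There is essentially no analytic obstacle here; the only point requiring care is interpretational, namely to read $T_n(w)$ as the polynomial $T_n$ evaluated at the complex number $w=\tfrac12(z+z^{-1})$ rather than as the literal expression $\cos(n\arccos w)$, and the recurrence-based argument handles the full domain $z\neq0$ uniformly. As a sanity check one may instead substitute $z=e^{\mathbf{i}\theta}$, giving $w=\cos\theta$ and $T_n(w)=\cos(n\theta)=\tfrac12(e^{\mathbf{i}n\theta}+e^{-\mathbf{i}n\theta})=U_n(z)$; since this holds on the whole unit circle, an infinite set, the polynomial identity then propagates to all $z\neq0$, recovering the same conclusion.
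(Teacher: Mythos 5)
Your proof is correct. Note that the paper itself gives no proof of this lemma at all: it is quoted verbatim from the cited reference (Mednykh--Mednykh, \cite{med}), so there is no internal argument to compare against. Your induction is a clean, self-contained verification: the base cases $U_0(z)=1=T_0(w)$ and $U_1(z)=w=T_1(w)$ are right, the computation $2w\,U_n(z)=U_{n+1}(z)+U_{n-1}(z)$ is exactly the identity needed to match the Chebyshev recurrence $T_{n+1}(w)=2wT_n(w)-T_{n-1}(w)$, and the induction then closes for every $z\in\mathbb{C}\setminus\{0\}$. Your interpretational remark is also the right one to make: the lemma is a statement about the polynomial $T_n$ evaluated at a complex argument, not about $\cos(n\arccos w)$ literally, and the recurrence-based argument (or, equivalently, your unit-circle argument plus the fact that two Laurent polynomials agreeing on infinitely many points coincide) handles all $z\neq 0$ without any appeal to analytic continuation. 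Either of your two routes would serve as a complete proof; the first is the more elementary, the second is the quickest given the trigonometric definition used in the paper.
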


Let $P(z)$ be a polynomial of degree $k$ with integer coefficients and $P(z)=P(z^{-1})$. Then $P(z)$ have the following form
$$
P(z)=\eta_0+\sum_{j=1}^k\eta_j(z^j+z^{-j}),
$$
where $\eta_0,\eta_1,\ldots,\eta_k$ are integers.
Let $w=\frac{1}{2}\left(z+z^{-1}\right)$ and $U(w)=\eta_0+\sum_{j=1}^k2\eta_jT_j(w)$. By Lemma \ref{che}, we have $P(z)=U(w)$.
The polynomial $U(w)$ is called the \textbf{Chebyshev transform} of $P(z)$.

\begin{lemma}\label{ccc}
Let $P(z)=\eta_0+\sum_{j=1}^k\eta_j(z^j+z^{-j})$ be a polynomial of degree $k$ with integer coefficients. Then we have
$$
\begin{aligned}
\prod_{j=0}^{n-1}P(\V_n^j)
&=(-1)^{nk}\eta_k^n\prod_{\ell=1}^{k}(2T_n(w_\ell)-2),\\
\end{aligned}
$$
where $\V_n=\exp(2\pi \mathbf{i}/n)$ and
$w_1, w_2,\ldots, w_k$ are all the roots of Chebyshev transform of $P(z)=0$.
\end{lemma}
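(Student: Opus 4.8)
The plan is to push everything through the Chebyshev transform $U(w)$ of $P(z)$ and thereby convert the product over roots of unity into a product over the roots $w_1,\dots,w_k$. First I would record that, since $U$ is assembled from $T_1,\dots,T_k$ and $T_k$ has leading coefficient $2^{k-1}$, the transform $U(w)=\eta_0+\sum_{j=1}^k 2\eta_j T_j(w)$ is a genuine degree-$k$ polynomial with leading coefficient $2^k\eta_k$; hence it factors as
$$U(w)=2^k\eta_k\prod_{\ell=1}^k (w-w_\ell),$$
where $w_1,\dots,w_k$ are precisely the roots named in the statement. Using the identity $P(z)=U(w)$ with $w=\tfrac12(z+z^{-1})$ established via Lemma \ref{che}, evaluation at $z=\V_n^j$ gives $w=\tfrac12(\V_n^j+\V_n^{-j})=\cos(2\pi j/n)$, so that
$$P(\V_n^j)=2^k\eta_k\prod_{\ell=1}^k\big(\cos(2\pi j/n)-w_\ell\big).$$
Taking the product over $j=0,\dots,n-1$ and interchanging the two products then reduces the whole computation to evaluating, for each fixed $w_\ell$, the single product $\prod_{j=0}^{n-1}\big(\cos(2\pi j/n)-w_\ell\big)$.

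The heart of the argument — the step I expect to carry the genuine content — is the closed evaluation
$$\prod_{j=0}^{n-1}\big(w-\cos(2\pi j/n)\big)=\frac{1}{2^n}\big(2T_n(w)-2\big).$$
To establish this I would again set $w=\tfrac12(z+z^{-1})$ and use the factorization
$$w-\cos(2\pi j/n)=\frac{1}{2z}\,(z-\V_n^j)(z-\V_n^{-j}),$$
which follows by expanding $(z-\V_n^j)(z-\V_n^{-j})=z^2-2\cos(2\pi j/n)\,z+1$ and dividing by $2z$. Multiplying over $j=0,\dots,n-1$ and using that both families $\{\V_n^j\}_j$ and $\{\V_n^{-j}\}_j$ run over all $n$-th roots of unity, so that $\prod_{j=0}^{n-1}(z-\V_n^j)=\prod_{j=0}^{n-1}(z-\V_n^{-j})=z^n-1$, collapses the product to $\dfrac{(z^n-1)^2}{2^n z^n}=\dfrac{1}{2^n}\big(z^n-2+z^{-n}\big)$, which equals $\tfrac{1}{2^n}(2T_n(w)-2)$ by Lemma \ref{che} applied to $z^n$.

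Finally I would assemble the pieces. Since $\prod_{j=0}^{n-1}\big(\cos(2\pi j/n)-w_\ell\big)=(-1)^n\prod_{j=0}^{n-1}\big(w_\ell-\cos(2\pi j/n)\big)=(-1)^n 2^{-n}\big(2T_n(w_\ell)-2\big)$, substituting into the displayed product gives
$$\prod_{j=0}^{n-1}P(\V_n^j)=(2^k\eta_k)^n\,(-1)^{nk}\,2^{-nk}\prod_{\ell=1}^k\big(2T_n(w_\ell)-2\big).$$
The powers of two cancel, since $(2^k)^n 2^{-nk}=1$, leaving $(-1)^{nk}\eta_k^n\prod_{\ell=1}^k\big(2T_n(w_\ell)-2\big)$, which is the claimed formula. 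The only points requiring care are the per-root sign $(-1)^n$ (accumulating to $(-1)^{nk}$ across the $k$ roots) and the exact cancellation of the $2^k$ factors; the genuinely nontrivial input is the roots-of-unity evaluation of the middle paragraph.
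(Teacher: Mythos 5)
Your proof is correct, but it takes a genuinely different (dual) route from the paper's. The paper works entirely in the $z$-plane: it normalizes $P(z)$ to the monic polynomial $P_1(z)=\frac{z^k}{\eta_k}P(z)$ of degree $2k$ (picking up the sign $(-1)^{(n+1)k}$ from $\prod_{j=0}^{n-1}\V_n^{jk}$), factors $P_1(z)=\prod_{\ell=1}^{k}(z-z_\ell)(z-z_\ell^{-1})$ over the inverse-paired roots, evaluates $\prod_{j=0}^{n-1}(z_\ell-\V_n^j)(z_\ell^{-1}-\V_n^j)=(z_\ell^n-1)(z_\ell^{-n}-1)=2-z_\ell^n-z_\ell^{-n}$, and only at the last step converts via $z_\ell^n+z_\ell^{-n}=2T_n(w_\ell)$. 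You instead work in the $w$-plane: you factor the Chebyshev transform itself, $U(w)=2^k\eta_k\prod_{\ell=1}^{k}(w-w_\ell)$ (using that $T_k$ has leading coefficient $2^{k-1}$), and isolate the standalone polynomial identity $\prod_{j=0}^{n-1}\bigl(w-\cos(2\pi j/n)\bigr)=2^{-n}\bigl(2T_n(w)-2\bigr)$, which is precisely the paper's per-root computation recast as an identity in $w$. The bookkeeping burdens are swapped accordingly: the paper tracks $2k$ roots $z_\ell^{\pm 1}$ and the sign $(-1)^{(n+1)k}(-1)^k=(-1)^{nk}$ coming from the monic normalization, while you track $k$ roots $w_\ell$ plus the cancellation $(2^k)^n 2^{-nk}=1$ of Chebyshev leading coefficients together with the per-root sign $(-1)^n$. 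Your route buys a clean, reusable cosine-product formula and never needs that the roots of $z^kP(z)$ come in inverse pairs (which the paper gets implicitly from $P(z)=P(z^{-1})$); the paper's route avoids any appeal to the leading coefficient of $T_k$. Both arguments ultimately rest on the same two inputs, $\prod_{j=0}^{n-1}(z-\V_n^j)=z^n-1$ and Lemma \ref{che}, so the difference is one of decomposition rather than of underlying mechanism, but your organization is a legitimate and slightly more modular alternative.
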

\begin{proof}
Let $P_1(z)=\frac{z^k}{\eta_k}P(z)$. Then $P_1(z)$ is a monic polynomial of degree $2k$ with the same roots as $P(z)$. Hence we have
 $$\prod_{j=0}^{n-1}P_1(\varepsilon_n^j)=\frac{\V_n^{\frac{(n-1)nk}2}}{{\eta_k^{n}}}\prod_{j=0}^{n-1}P(\V_n^j)=
 \frac{(-1)^{(n+1)k}}{\eta_k^{n}}\prod_{j=0}^{n-1}P(\V_n^j).$$
Suppose that the roots of $P_1(z)=0$ are $z_1,z_1^{-1},z_2,z_2^{-1}\ldots,z_k,z_k^{-1}$.
 Then we have
$$
P_1(z)=\prod_{\ell=1}^{k}(z-z_\ell)(z-z_\ell^{-1}).
$$
Let $w=\frac{1}{2}(z+z^{-1})$ and $w_\ell=\frac{1}{2}(z_\ell+z_\ell^{-1})$. Then $w_1, w_2,\ldots, w_k$ are all the roots of $U(w)=\eta_0+\sum_{j=1}^k2\eta_j T_j(w)$. Therefore,
$$
\begin{aligned}
\prod_{j=0}^{n-1}P(\V_n^j)
&=(-1)^{(n+1)k}\eta_k^n\prod_{j=0}^{n-1}P_1(\V_n^j)\\
&=(-1)^{(n+1)k}\eta_k^n\prod_{j=0}^{n-1}\prod_{\ell=1}^k(\V_n^j-z_\ell)(\V_n^j-z_\ell^{-1})\\
&=(-1)^{(n+1)k}\eta_k^n\prod_{\ell=1}^k\prod_{j=0}^{n-1}(z_\ell-\V_n^j)(z_\ell^{-1}-\V_n^j)\\
\end{aligned}
$$
$$
\begin{aligned}
&=(-1)^{(n+1)k}\eta_k^n\prod_{\ell=1}^k(z_\ell^n-1)(z_\ell^{-n}-1)\\
&=(-1)^{(n+1)k}\eta_k^n\prod_{\ell=1}^k(2-z_\ell^n-z_\ell^{-n})\\
&=(-1)^{nk}\eta_k^n\prod_{\ell=1}^k(2T_n(w_\ell)-2).
\end{aligned}
$$
This completes the proof.
\end{proof}

Now we define
\begin{itemize}
\item[] $R_1=\{ \pm \a_1, \pm \a_2,\ldots, \pm \a_r\}$, ~~$T_1=\{\pm \beta_1, \pm \beta_2,\ldots, \pm \beta_t\}$,~~
$S_1=\{\gamma_1, \gamma_2,\ldots, \gamma_s\}$,
\item[] $R_2=\{ \pm \a_1, \pm \a_2,\ldots, \pm \a_r, \frac{n}{2}\}$,~~ $T_2=\{\pm \beta_1, \pm \beta_2,\ldots, \pm \beta_t\}$,~~
$S_2=\{\gamma_1, \gamma_2,\ldots, \gamma_s\}$,
\item[] $R_3=\{ \pm \a_1, \pm \a_2,\ldots, \pm \a_r\}$, ~~$T_3=\{\pm \beta_1, \pm \beta_2,\ldots, \pm \beta_t,\frac{n}{2}\}$,~~
$S_3=\{\gamma_1, \gamma_2,\ldots, \gamma_s\}$,
\item[] $R_4=\{ \pm \a_1, \pm \a_2,\ldots, \pm \a_r, \frac{n}{2}\}$, ~~$T_4=\{\pm \beta_1, \pm \beta_2,\ldots, \pm \beta_t,\frac{n}{2}\}$,~~
$S_4=\{\gamma_1, \gamma_2,\ldots, \gamma_s\}$,
\end{itemize}
where $1\le \a_1<\cdots <\a_r<\frac{n}{2}$, $1\le \b_1<\cdots<\b_t<\frac{n}{2}$, $0\le \gamma_1<\cdots<\gamma_s\le n-1$.

Throughout the rest of this paper, we always assume that
$$
\Gamma_j=BC(\mathbb{Z}_n;R_j,T_j,S_j),
$$
where $j=1,2,3,4$. Obviously, any bicirculant graph must be one of $\Gamma_1,\Gamma_2, \Gamma_3, \Gamma_4$,
 and $n$ is even for $\Gamma_2, \Gamma_3, \Gamma_4$.

Let
$$
 \begin{aligned}
 \mathcal{A}(z)&=2r+s+1-\sum_{j=1}^r\left(z^{\a_j}+z^{-\a_j}\right),\\
\mathcal{B}(z)&=2t+s+1-\sum_{j=1}^t\left(z^{\b_j}+z^{-\b_j}\right),\\
\mathcal{C}(z)&=-\sum_{j=1}^s z^{\gamma_j}.
 \end{aligned}
$$
Clearly, $\mathcal{A}(z)=\mathcal{A}(z^{-1})$
and $\mathcal{B}(z)=\mathcal{B}(z^{-1})$.

Define
\begin{align}
P_1(z)&=\mathcal{A}(z)\mathcal{B}(z)-\mathcal{C}(z^{-1})\mathcal{C}(z),\\
P_{2}(z)&=(\mathcal{A}(z)+2)\mathcal{B}(z)-\mathcal{C}(z^{-1})\mathcal{C}(z),\\
P_{3}(z)&=\mathcal{A}(z)(\mathcal{B}(z)+2)-\mathcal{C}(z^{-1})\mathcal{C}(z), \\
P_{4}(z)&=(\mathcal{A}(z)+2)(\mathcal{B}(z)+2)-\mathcal{C}(z^{-1})\mathcal{C}(z).
\end{align}
Then $P_1(z),P_2(z),P_3(z),P_4(z)$ have same degree and $P_j(z)=P_j(z^{-1})$ for $j=1, 2, 3, 4$.
In the following, we always assume that the degree of $P_j(z)$ is $k$ for $j=1,2,3,4$.

\section{Counting the number of rooted spanning forests}
In this section, we aim to find new formulas for the number of rooted spanning forests
of bicirculant graphs in light of Chebyshev polynomials.

\begin{theorem}\label{th1}
Let $\Gamma=BC(\mathbb{Z}_n;R,T,S)$ be a bicirculant graph. Then we have
\begin{itemize}
\item[(1)] if $\Gamma=\Gamma_1$, then $f_{\Gamma_1}(2n)=|a_k|^n\prod_{\ell=1}^{k}|2T_n(w_{\ell})-2|$,
\item[(2)] if $\Gamma=\Gamma_2$, then $f_{\Gamma_2}(2n)=
|b_k|^\frac{n}{2}|a_k|^\frac{n}{2}\prod_{\ell=1}^{k}|2T_\frac{n}{2}(v_{\ell})+2|\prod_{\ell=1}^{k}|2T_\frac{n}{2}(w_{\ell})-2|$,
\item[(3)] if $\Gamma=\Gamma_3$, then $f_{\Gamma_3}(2n)=
|c_k|^\frac{n}{2}|a_k|^\frac{n}{2}\prod_{\ell=1}^{k}|2T_\frac{n}{2}(u_{\ell})+2|\prod_{\ell=1}^{k}|2T_\frac{n}{2}(w_{\ell})-2|$,
\item[(4)] if $\Gamma=\Gamma_4$, then $f_{\Gamma_4}(2n)=
|d_k|^\frac{n}{2}|a_k|^\frac{n}{2}\prod_{\ell=1}^{k}|2T_\frac{n}{2}(y_{\ell})+2|\prod_{\ell=1}^{k}|2T_\frac{n}{2}(w_{\ell})-2|$,
\end{itemize}
where $a_k,b_k, c_k,d_k$ are the leading coefficients of $P_1(z),P_2(z),P_3(z),P_4(z)$, respectively, and $w_\ell,v_\ell,u_\ell,y_\ell$, $\ell=1,2,\ldots, k$, are all the roots of Chebyshev transform of $P_j(z)=0$ for $j=1,2,3,4$, respectively.
\end{theorem}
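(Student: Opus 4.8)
The plan is to start from the classical identity $f_{\Gamma}(2n)=\det(I_{2n}+L(\Gamma))$ of \cite{AK} and to exploit the block-circulant structure supplied by Lemma \ref{adj}. Writing $L(\Gamma)=D(\Gamma)-A(\Gamma)$ and reading off the vertex degrees ($2r+s$ on the right part and $2t+s$ on the left part for $\Gamma_1$, with an extra $+1$ on a part whenever $\tfrac n2$ is adjoined), I would check that
$$
I_{2n}+L(\Gamma_1)=\begin{pmatrix}\mathcal{A}(Q)&\mathcal{C}(Q)\\ \mathcal{C}(Q^{-1})&\mathcal{B}(Q)\end{pmatrix},
$$
so that $\mathcal{A},\mathcal{B},\mathcal{C}$ are exactly the symbols of the four blocks (here the ``$+1$'' inside $\mathcal{A}$ and $\mathcal{B}$ comes from the added $I_{2n}$). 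All blocks are polynomials in the shift $Q$, hence they commute, and Lemma \ref{mat} collapses the $2n\times 2n$ determinant to
$$
f_{\Gamma_1}(2n)=\det\bigl(\mathcal{A}(Q)\mathcal{B}(Q)-\mathcal{C}(Q^{-1})\mathcal{C}(Q)\bigr)=\det P_1(Q).
$$

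Since $P_1(Q)$ is itself a polynomial in the cyclic shift $Q$, it is circulant with eigenvalues $P_1(\V_n^j)$, $j=0,\dots,n-1$, so $\det P_1(Q)=\prod_{j=0}^{n-1}P_1(\V_n^j)$. Because $P_1(z)=P_1(z^{-1})$ has the palindromic form required by Lemma \ref{ccc}, that lemma gives $\prod_{j=0}^{n-1}P_1(\V_n^j)=(-1)^{nk}a_k^{\,n}\prod_{\ell=1}^k(2T_n(w_\ell)-2)$. As $I_{2n}+L(\Gamma_1)\succeq I_{2n}\succ 0$ its determinant is a positive integer, so passing to absolute values absorbs the sign and yields $f_{\Gamma_1}(2n)=|a_k|^n\prod_\ell|2T_n(w_\ell)-2|$, which is case (1).

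For $\Gamma_2,\Gamma_3,\Gamma_4$ the only change is the appearance of $Q^{n/2}$ in the diagonal block(s) coming from the self-paired element $\tfrac n2$: for $\Gamma_2$ the top-left block becomes $\mathcal{A}(Q)+I-Q^{n/2}$, and so on. The same use of Lemma \ref{mat} reduces the determinant to $\det M$ for a single polynomial $M$ in $Q$, whose $j$-th eigenvalue is obtained by the substitution $Q\mapsto\V_n^j$. The decisive observation is that $Q^{n/2}$ acts as $\V_n^{jn/2}=(-1)^j$, so the eigenvalue splits by the parity of $j$: for even $j$ it equals $P_1(\V_n^j)$, while for odd $j$ the summand $I-Q^{n/2}$ contributes $+2$ and produces $P_2(\V_n^j)$ (respectively $P_3,P_4$ for $\Gamma_3,\Gamma_4$). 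I would therefore factor $\det M=\prod_{j\ \mathrm{even}}P_1(\V_n^j)\cdot\prod_{j\ \mathrm{odd}}P_2(\V_n^j)$. The even indices $j=2m$ satisfy $\V_n^{2m}=\V_{n/2}^{\,m}$, so the first product is handled by Lemma \ref{ccc} at level $\tfrac n2$, giving the $2T_{n/2}(w_\ell)-2$ factors together with $a_k^{n/2}$.

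The main obstacle is the product over odd indices, which ranges over the roots of $z^{n/2}+1=0$ rather than of $z^{n/2}-1=0$. To treat it I would prove the expected ``$+2$'' analogue of Lemma \ref{ccc}: normalizing $P_2$ to the monic palindrome $\prod_\ell(z-s_\ell)(z-s_\ell^{-1})$ and replacing the identity $\prod_{j=0}^{n-1}(s_\ell-\V_n^j)=s_\ell^{\,n}-1$ used in Lemma \ref{ccc} by $\prod_{j\ \mathrm{odd}}(s_\ell-\V_n^j)=s_\ell^{\,n/2}+1$ (valid since the odd powers $\V_n^j$ are precisely the roots of $z^{n/2}+1$), each factor becomes $(s_\ell^{\,n/2}+1)(s_\ell^{\,-n/2}+1)=2+s_\ell^{\,n/2}+s_\ell^{\,-n/2}=2T_{n/2}(v_\ell)+2$ by Lemma \ref{che}, where $v_\ell=\tfrac12(s_\ell+s_\ell^{-1})$. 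All stray unimodular prefactors (the powers of $\V_n$ from the Laurent normalization and the global signs) have modulus one and disappear once absolute values are taken, which is legitimate because $\det(I_{2n}+L)>0$. Combining the two parity products yields cases (2)--(4), with $(b_k,v_\ell)$, $(c_k,u_\ell)$, $(d_k,y_\ell)$ playing for $\Gamma_2,\Gamma_3,\Gamma_4$ the role that $(a_k,w_\ell)$ plays for $\Gamma_1$.
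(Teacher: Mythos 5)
Your proposal is correct, and for case (1) it is essentially the paper's argument: both reduce $\det(I_{2n}+L(\Gamma_1))$ via Lemma \ref{mat} and the circulant structure to $\prod_{j=0}^{n-1}P_1(\V_n^j)$ and then invoke Lemma \ref{ccc}, with positivity of $\det(I_{2n}+L)$ absorbing the sign; you apply Lemma \ref{mat} directly to the block matrix of polynomials in $Q$, while the paper first conjugates by the diagonalization of $Q$ and extracts the $2n$ eigenvalues as roots of $n$ quadratics, which is equivalent (and your version is slightly cleaner, since it avoids the eigenvalue bookkeeping). Where you genuinely diverge is the treatment of the odd-index product in cases (2)--(4). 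The paper writes $\prod_{j\,\mathrm{odd}}P_2(\V_n^j)$ as the quotient $\prod_{j=0}^{n-1}P_2(\V_n^j)\big/\prod_{j=0}^{n/2-1}P_2(\V_{n/2}^j)$, applies Lemma \ref{ccc} at both levels $n$ and $\frac{n}{2}$, and then uses the identity $T_n(v_\ell)-1=2\left(T_{\frac{n}{2}}(v_\ell)-1\right)\left(T_{\frac{n}{2}}(v_\ell)+1\right)$ to convert the ratio into the factors $2T_{\frac{n}{2}}(v_\ell)+2$. You instead prove a direct ``$+2$'' analogue of Lemma \ref{ccc}, using that the odd powers $\V_n^j$ are precisely the roots of $z^{n/2}+1$, so each root pair contributes $(s_\ell^{n/2}+1)(s_\ell^{-n/2}+1)=2T_{\frac{n}{2}}(v_\ell)+2$ by Lemma \ref{che}. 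Both routes are valid; yours has the small advantage of avoiding a division (the paper's quotient tacitly requires $\prod_{j}P_2(\V_{n/2}^j)\neq 0$, which does hold since $P_2>0$ on the unit circle, but is never checked in the paper's proof), while the paper's route reuses Lemma \ref{ccc} verbatim and needs no new product identity. Your disposal of the unimodular prefactors and global signs at the very end, justified by positivity of the determinant, matches the paper's handling.
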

\begin{proof}
(1)  By Lemma \ref{adj}, we obtain that the adjacency matrix of the graph $\Gamma_1$ is given by the
$2n \times 2n$ block matrix
$$
A(\Gamma_1)=\begin{pmatrix}
\sum_{j=1}^r\left(Q^{\a_j}+Q^{-\a_j}\right)& \sum_{j=1}^s Q^{\gamma_j} \\
\sum_{j=1}^s Q^{-\gamma_j} & \sum_{j=1}^t\left(Q^{\beta_j}+Q^{-\beta_j}\right)
\end{pmatrix},
$$
where $Q=\operatorname{circ}(0,1,0, \ldots, 0)$. Let $L(\Gamma_1)$ be the Laplacian matrix of $\Gamma_1$. Then
 $$
L(\Gamma_1)+I_{2n}=
\begin{pmatrix}
 k_1I_n-\sum_{j=1}^r\left(Q^{\a_j}+Q^{-\a_j}\right) & -\sum_{j=1}^s Q^{\gamma_j}\\
 -\sum_{j=1}^s Q^{-\gamma_j} & k_2I_n-\sum_{j=1}^t\left(Q^{\beta_j}+Q^{-\beta_j}\right)
 \end{pmatrix},
 $$
 where $k_1=2r+s+1$ and $k_2=2t+s+1$.
 Since the eigenvalues of circulant matrix $Q$ are $1, \V_n, \ldots,\V_n^{n-1}$, where $\V_n=\exp(2\pi \textbf{i}/n)$, then
there exists an invertible matrix $P$ such that
$$
P^{-1}QP=\mathcal{K}_n=
\begin{pmatrix}
1 & 0 & 0 & \cdots & 0 \\
0 & \V_n & 0 & \cdots & 0 \\
0 & 0 & \V_n^2 & \cdots & 0 \\
\vdots & \vdots & \vdots & \ddots & \vdots \\
0 & 0 & 0 & \cdots & \V_n^{n-1}
\end{pmatrix}.
$$
 Then we have
$$
\begin{pmatrix}
P^{-1} & 0 \\
0 & P^{-1}
\end{pmatrix}
(L(\Gamma_1)+I_{2n})
\begin{pmatrix}
P & 0 \\
0 & P
\end{pmatrix}=
\begin{pmatrix}
\mathcal{A}(\mathcal{K}_n) & \mathcal{C}(\mathcal{K}_n) \\
\mathcal{C}(\mathcal{K}_n^{-1}) & \mathcal{B}(\mathcal{K}_n)
\end{pmatrix}.$$
Hence $I_{2n}+L(\Gamma_1)$ and $\mathcal{L}=\begin{pmatrix}
\mathcal{A}(\mathcal{K}_n) & \mathcal{C}(\mathcal{K}_n) \\
\mathcal{C}(\mathcal{K}_n^{-1}) & \mathcal{B}(\mathcal{K}_n)
\end{pmatrix}$ have the same eigenvalues. Suppose $\lambda$ is an eigenvalue of $\mathcal{L}$. Then
$$
0={\rm det}
\begin{pmatrix}
\mathcal{L}-\lambda I_{2n}
\end{pmatrix}
={\rm det}
\begin{pmatrix}
\mathcal{A}(\mathcal{K}_n)-\lambda I_{n} & \mathcal{C}(\mathcal{K}_n) \\
\mathcal{C}(\mathcal{K}_n^{-1}) & \mathcal{B}(\mathcal{K}_n)-\lambda I_{n}
\end{pmatrix}.
$$
Since $(\mathcal{A}(\mathcal{K}_n)-\lambda I_{n})\mathcal{C}(\mathcal{K}_n^{-1})=\mathcal{C}(\mathcal{K}_n^{-1})(\mathcal{A}(\mathcal{K}_n)-\lambda I_{n})$,
by Lemma \ref{mat}, we have
$$
 \begin{aligned}
0&={\rm det}
(
(\mathcal{A}(\mathcal{K}_n)-\lambda I_{n})( \mathcal{B}(\mathcal{K}_n)-\lambda I_{n})-\mathcal{C}(\mathcal{K}_n^{-1})\mathcal{C}(\mathcal{K}_n)
)\\
&={\rm det}
(
\lambda^2 I_n -\lambda I_n(\mathcal{A}(\mathcal{K}_n)+\mathcal{B}(\mathcal{K}_n))+
\mathcal{A}(\mathcal{K}_n)\mathcal{B}(\mathcal{K}_n)-\mathcal{C}(\mathcal{K}_n^{-1})\mathcal{C}(\mathcal{K}_n)
).
 \end{aligned}
$$
Hence, $\lambda$ is a root of the quadratic equation
$$
\lambda^2-\left(\mathcal{A}\left(\varepsilon_n^j\right)+\mathcal{B}(\varepsilon_n^{j})\right)\lambda
+\mathcal{A}\left(\varepsilon_n^j\right) \mathcal{B}\left(\varepsilon_n^{j}\right)-\mathcal{C}\left(\varepsilon_n^{-j}\right) \mathcal{C}\left(\varepsilon_n^{j}\right)=0,
$$
where $j=0,1,2,\ldots,n-1$. The solutions of this equation are $\lambda_{j,1},\lambda_{j,2}$ for $j=0,1,2,\ldots,n-1$.
Then we have
$$
\lambda_{j,1}\lambda_{j,2}=\mathcal{A}\left(\varepsilon_n^j\right) \mathcal{B}\left(\varepsilon_n^{j}\right)-\mathcal{C}\left(\varepsilon_n^{-j}\right) \mathcal{C}\left(\varepsilon_n^{j}\right)
=P_1(\V_n^j).
$$
Therefore,
$$
f_{\Gamma_1}(2n)=\prod_{j=0}^{n-1}\lambda_{j,1}\lambda_{j,2}=\prod_{j=0}^{n-1} P_1\left(\varepsilon_n^j\right),
$$
where $P_1(z)$ is defined in $(3)$. Since $P_1(z)=P_1(z^{-1})$ is a polynomial of degree $k$ with integer
coefficients, by Lemma \ref{ccc}, we have
$$
f_{\Gamma_1}(2n)=(-1)^{nk}a_k^n\prod_{\ell=1}^{k}(2T_n(w_{\ell})-2),
$$\
where $a_k$ is the leading coefficient of $P_1(z)$, and $w_1, w_2,\ldots, w_k$ are all the roots of Chebyshev transform of $P_1(z)=0$.
Since $f_{\Gamma_1}(2n)$ is a positive integer, the result follows.

\medskip

(2) Let $\lambda$ be the eigenvalue of $I_{2n}+L(\Gamma_2)$. Similarly as in the proof of $(1)$,
$\lambda$ is a root of the quadratic equation
$$
\lambda^2-\left(\mathcal{A}\left(\varepsilon_n^j\right)+1-\V_n^{\frac{jn}{2}}+\mathcal{B}\left(\varepsilon_n^{j}\right)\right)\lambda
+\left(\mathcal{A}\left(\varepsilon_n^j\right)+1-\V_n^{\frac{jn}{2}}\right) \mathcal{B}\left(\varepsilon_n^{j}\right)-\mathcal{C}\left(\varepsilon_n^{-j}\right) \mathcal{C}\left(\varepsilon_n^{j}\right)=0,
$$
where $j=0,1,2,\ldots,n-1$. The solutions of this equation are $\lambda_{j,1},\lambda_{j,2}$ for $j=0,1,2,\ldots,n-1$.
Then we have
$$
\lambda_{j,1}\lambda_{j,2}=\left(\mathcal{A}\left(\varepsilon_n^j\right)+1-\V_n^{\frac{jn}{2}}\right) \mathcal{B}\left(\varepsilon_n^{j}\right)-\mathcal{C}\left(\varepsilon_n^{-j}\right) \mathcal{C}\left(\varepsilon_n^{j}\right).
$$
Note that $\lambda_{j,1}\lambda_{j,2}=\mathcal{A}(\varepsilon_n^j) \mathcal{B}(\varepsilon_n^{j})-\mathcal{C}(\varepsilon_n^{-j}) \mathcal{C}(\varepsilon_n^{j})=P_1(\V_n^j)$
for even $j$, and
$\lambda_{j,1}\lambda_{j,2}=\left(\mathcal{A}(\varepsilon_n^j)+2\right) \mathcal{B}(\varepsilon_n^{j})-\mathcal{C}(\varepsilon_n^{-j}) \mathcal{C}(\varepsilon_n^{j})=P_2(\V_n^j)$ for
odd $j$.
Therefore,
$$
\begin{aligned}
f_{\Gamma_2}(2n)&=\prod_{j=0}^{n-1}\lambda_{j,1}\lambda_{j,2}=
\prod_{j=0}^{\frac{n}{2}-1}P_{2}(\V_n^{2j+1})\prod_{j=0}^{\frac{n}{2}-1}P_{1}(\V_n^{2j})\\
&=\frac{\prod_{j=0}^{n-1}P_{2}(\V_n^j)}{\prod_{j=0}^{\frac{n}{2}-1}P_{2}(\V_n^{2j})}\prod_{j=0}^{\frac{n}{2}-1}P_{1}(\V_n^{2j})\\
&=\frac{\prod_{j=0}^{n-1}P_{2}(\V_n^j)}{\prod_{j=0}^{\frac{n}{2}-1}P_{2}(\V_{\frac{n}{2}}^{j})}\prod_{j=0}^{\frac{n}{2}-1}P_{1}(\V_\frac{n}{2}^{j}),\\
\end{aligned}
$$
where $P_1(z)$ and $P_2(z)$ are defined in $(3)$ and $(4)$, respectively. By Lemma \ref{ccc}, we have
$$
\begin{aligned}
\prod_{j=0}^{n-1}P_{2}(\V_n^j)&=(-1)^{nk}b_k^n\prod_{\ell=1}^{k}(2T_n(v_{\ell})-2),\\
\prod_{j=0}^{\frac{n}{2}-1}P_{2}(\V_\frac{n}{2}^j)&=(-1)^{\frac{nk}{2}}b_k^\frac{n}{2}\prod_{\ell=1}^{k}(2T_\frac{n}{2}(v_{\ell})-2),\\
\prod_{j=0}^{\frac{n}{2}-1}P_{1}(\V_\frac{n}{2}^j)&=(-1)^{\frac{nk}{2}}a_k^\frac{n}{2}\prod_{\ell=1}^{k}(2T_\frac{n}{2}(w_\ell)-2),\\
\end{aligned}
$$
where $b_k$ and $a_k$ are the leading coefficients of $P_2(z)$ and $P_1(z)$, respectively, and $v_\ell$ and $w_\ell$, $\ell=1,2,\ldots,k$,
are all the roots of Chebyshev transform of $P_2(z)=0$ and $P_1(z)=0$, respectively.
Since $T_n\left(v_\ell\right)-1=2(T_\frac{n}{2}\left(v_\ell\right)-1)(T_\frac{n}{2}\left(v_\ell\right)+1)$, we have
$$
\begin{aligned}
f_{\Gamma_2}(2n)&=\frac{(-1)^{nk}b_k^n\prod_{\ell=1}^{k}(2T_n(v_{\ell})-2)}
{(-1)^{\frac{nk}{2}}b_k^\frac{n}{2}\prod_{\ell=1}^{k}(2T_\frac{n}{2}(v_\ell)-2)}
(-1)^{\frac{nk}{2}}a_k^{\frac{n}{2}}\prod_{\ell=1}^{k}(2T_{\frac{n}{2}}(w_\ell)-2)\\
&=(-1)^{nk}b_k^\frac{n}{2}a_k^\frac{n}{2}\prod_{\ell=1}^{k}(2T_\frac{n}{2}(v_\ell)+2)\prod_{\ell=1}^{k}(2T_\frac{n}{2}(w_\ell)-2).
\end{aligned}
$$
Since $f_{\Gamma_2}(2n)$ is a positive integer, the result follows.

\medskip

Similarly, we can prove that  both $(3)$ and $(4)$ hold.
\end{proof}

\section{Arithmetic properties of $f_{\Gamma}(2n)$}
In this section, we investigate some arithmetic properties of rooted spanning forests of bicirculant graphs.
Recall that any positive integer $u$ can be uniquely represented in the form $u=vr^2$,
where $u$ and $v$ are positive integers and $v$ is square-free. We will call $v$ the \textbf{square-free part} of $u$.
The main result of this section is the following theorem.

\begin{theorem}\label{th2}
Let $f_{\Gamma}(2n)$ be the number of rooted spanning forests of bicirculant graph $\Gamma$.
Denote by $k_1$ (resp. $k_2$) the number of odd
numbers (resp. even numbers) in $\{\a_1, \a_2, \ldots, \a_r\}$. Denote by $m_1$ (resp. $m_2$) the
number of odd numbers (resp. even numbers) in  $\{\b_1, \b_2, \ldots, \b_t\}$.
Denote by $h_1$ (resp. $h_2$) the
number of odd numbers (resp. even numbers) in  $\{\gamma_1, \gamma_2, \ldots, \gamma_s\}$.
Then we have
\begin{itemize}
\item[(1)] if $\Gamma=\Gamma_1$, then there exist two integer sequences $a_1(n)$ and $b_1(n)$ such that
\begin{eqnarray}
f_{\Gamma_1}(2n)=\begin{cases}
q_1 a_1(n)^2, &{\text{if $n$ is~odd}},\\[1mm]
\ell_1 b_1(n)^2, &{\text{if $n$ is~even}};
\end{cases}
\nonumber
\end{eqnarray}

\item[(2)] if $\Gamma=\Gamma_2$, then there exist two integer sequences $a_2(n)$ and $b_2(n)$ such that
\begin{eqnarray}
f_{\Gamma_2}(2n)=\begin{cases}
q_2 a_2(n)^2, &{\text{if $\frac{n}{2}$ is~odd}},\\[1mm]
\ell_1 b_2(n)^2, &{\text{if $\frac{n}{2}$ is~even}};
\end{cases}
\nonumber
\end{eqnarray}

\item[(3)] if $\Gamma=\Gamma_3$, then there exist two integer sequences $a_3(n)$ and $b_3(n)$ such that
\begin{eqnarray}
f_{\Gamma_3}(2n)=\begin{cases}
q_3 a_3(n)^2, &{\text{if $\frac{n}{2}$ is~odd}},\\[1mm]
\ell_1b_3(n)^2, &{\text{if $\frac{n}{2}$ is~even}};
\end{cases}
\nonumber
\end{eqnarray}

\item[(4)] if $\Gamma=\Gamma_4$, then there exist two integer sequences $a_4(n)$ and $b_4(n)$ such that
\begin{eqnarray}
f_{\Gamma_4}(2n)=\begin{cases}
q_4 a_4(n)^2, &{\text{if $\frac{n}{2}$ is~odd}},\\[1mm]
\ell_1 b_4(n)^2, &{\text{if $\frac{n}{2}$ is~even}},
\end{cases}
\nonumber
\end{eqnarray}
\end{itemize}
where $\ell_1,q_1,q_2,q_3,q_4$ are the square-free parts of $(2s+1)((4k_1+s+1)(4m_1+s+1)-(h_2-h_1)^2), 2s+1,
(2s+1)((4k_1+s+3)(4m_1+s+1)-(h_2-h_1)^2),(2s+1)((4k_1+s+1)(4m_1+s+3)-(h_2-h_1)^2),(2s+1)((4k_1+s+3)(4m_1+s+3)-(h_2-h_1)^2)$, respectively.
\end{theorem}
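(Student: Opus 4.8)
The plan is to bypass the Chebyshev form of Theorem~\ref{th1} and work directly with the product representations produced in its proof, namely $f_{\Gamma_1}(2n)=\prod_{j=0}^{n-1}P_1(\varepsilon_n^j)$ and, for $i\in\{2,3,4\}$,
$$f_{\Gamma_i}(2n)=\Big(\prod_{j=0}^{\frac n2-1}P_i(\varepsilon_n^{2j+1})\Big)\Big(\prod_{j=0}^{\frac n2-1}P_1(\varepsilon_{\frac n2}^{\,j})\Big),$$
using the identity $\varepsilon_n^{2}=\varepsilon_{n/2}$. I would show each such product is a fixed square-free integer times a perfect square, and that the square-free integer is exactly the one prescribed.

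First I would record the only two special evaluations that produce every constant in the statement. Direct substitution gives $\mathcal{A}(1)=\mathcal{B}(1)=s+1$ and $\mathcal{C}(1)=-s$, so $P_1(1)=(s+1)^2-s^2=2s+1$; note that in every case the index $j=0$ sits inside a $P_1$-factor, so $2s+1$ is the relevant value at $z=1$. Counting parities gives $\mathcal{A}(-1)=4k_1+s+1$, $\mathcal{B}(-1)=4m_1+s+1$ and $\mathcal{C}(-1)=h_1-h_2$, whence $P_1(-1)=(4k_1+s+1)(4m_1+s+1)-(h_2-h_1)^2$, and likewise $P_2(-1),P_3(-1),P_4(-1)$ are obtained by adding $2$ to the first factor, to the second, or to both. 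These are precisely the second factors appearing in $\ell_1,q_2,q_3,q_4$. Thus the theorem reduces to showing $f_{\Gamma_1}(2n)$ has square-free part $\mathrm{sqfree}(P_1(1))$ for $n$ odd and $\mathrm{sqfree}(P_1(1)P_1(-1))$ for $n$ even, and $f_{\Gamma_i}(2n)$ has square-free part $\mathrm{sqfree}(P_1(1)P_i(-1))$ when $\tfrac n2$ is odd and $\mathrm{sqfree}(P_1(1)P_1(-1))$ when $\tfrac n2$ is even.

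The heart of the argument is a pairing-plus-integrality step. Since each $P_i$ is palindromic, $P_i(\varepsilon_n^j)=P_i(\varepsilon_n^{n-j})$, so in any of the products above the factors pair off under $j\mapsto n-j$; the only self-paired indices are $j=0$ (contributing $P_i(1)$) and, when it lies in the index range, $j=n/2$ (contributing $P_i(-1)$). After extracting these self-paired factors, what remains is the square $B^2$ of a product $B=\prod_{j\in J}P_i(\varepsilon_n^j)$ over a set $J$ of pair representatives. I would then prove $B\in\mathbb{Z}$: it is an algebraic integer, and for every $a\in(\mathbb{Z}/n\mathbb{Z})^\times$ the automorphism $\sigma_a\colon\varepsilon_n\mapsto\varepsilon_n^a$ sends the factor at $j$ to the factor at $aj\bmod n$. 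Because multiplication by $a$ commutes with $j\mapsto-j$ and fixes the classes $\{0\}$ and $\{n/2\}$ (here $a$ is odd whenever $n$ is even), it permutes the $\pm$-classes underlying $J$, so $\sigma_a(B)=B$ and hence $B$ is a rational integer. The same observation handles the odd-index products for $\Gamma_2,\Gamma_3,\Gamma_4$: the units $a$ are odd, hence preserve oddness of the index, so those products are treated identically.

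With these two ingredients the four cases follow by parity bookkeeping. For $\Gamma_1$ with $n$ odd the only self-paired index is $j=0$, giving $f_{\Gamma_1}(2n)=P_1(1)B^2=q_1(gB)^2$ after writing $P_1(1)=q_1g^2$; for $n$ even both $j=0$ and $j=n/2$ occur, giving $P_1(1)P_1(-1)B^2=\ell_1\,b_1(n)^2$. For $i\in\{2,3,4\}$ the even-index factor $\prod_jP_1(\varepsilon_{n/2}^{\,j})$ is exactly the $\Gamma_1$-product over $(n/2)$-th roots, so the previous case with $n$ replaced by $n/2$ contributes $P_1(1)$ (times a square) when $\tfrac n2$ is odd and $P_1(1)P_1(-1)$ (times a square) when $\tfrac n2$ is even; meanwhile in the odd-index factor the self-paired index $j=n/2$ is present exactly when $\tfrac n2$ is odd, yielding the extra $P_i(-1)$, and is absent when $\tfrac n2$ is even, leaving a pure square. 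Multiplying the two factors and recollecting square-free parts yields $q_i$ when $\tfrac n2$ is odd and $\ell_1$ when $\tfrac n2$ is even, as claimed. The main obstacle is the integrality claim $B\in\mathbb{Z}$: the products run over ranges such as $\{1,\dots,(n-1)/2\}$ that are \emph{not} stable under the full Galois group, so one must argue through the induced action on $\pm$-classes rather than on individual roots, and verify that the self-paired indices are precisely the fixed points of this action.
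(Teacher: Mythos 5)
Your proposal is correct and takes essentially the same route as the paper: both rest on the product formulas from the proof of Theorem~\ref{th1}, the palindromic pairing $j\leftrightarrow n-j$ with self-paired factors $P_i(1)$ and $P_i(-1)$, the parity evaluations $P_1(1)=2s+1$, $P_1(-1)=(4k_1+s+1)(4m_1+s+1)-(h_2-h_1)^2$ (and the variants with $+3$), and Galois invariance to show the paired-off product is a rational integer. The only differences are cosmetic: you split the $\Gamma_i$ ($i=2,3,4$) count into the odd-index product and the product over $(n/2)$-th roots of unity where the paper keeps a single eigenvalue product $\prod_{j=0}^{n-1}\lambda_{j,1}\lambda_{j,2}$, and your explicit orbit argument on $\pm$-classes fills in the integrality step that the paper dispatches with a one-line citation.
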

\begin{proof}
(1) From the proof of Theorem \ref{th1}(1), we have
$f_{\Gamma_1}(2n)=\prod_{j=0}^{n-1} P_1(\varepsilon_n^j).$ Since $\lambda_{j,1}\lambda_{j,2}=P_1(\varepsilon_n^j)=\lambda_{n-j,1}\lambda_{n-j,2}$,
we have
\begin{eqnarray}
f_{\Gamma_1}(2n)=\begin{cases}
P_1\left(1\right)\left(\prod_{j=1}^{\frac{n-1}{2}}\lambda_{j,1}\lambda_{j,2}\right)^2, &{\text{if $n$ is~odd}},\\
P_1\left(1\right)P_1\left(-1\right)\left(\prod_{j=1}^{\frac{n}{2}-1}\lambda_{j,1}\lambda_{j,2}\right)^2, &{\text{if $n$ is~even}}.\\
\end{cases}
\nonumber
\end{eqnarray}
Since $P_1(1)=\mathcal{A}(1)\mathcal{B}(1)-\mathcal{C}(1)\mathcal{C}(1)$ and $P_1(-1)=\mathcal{A}(-1)\mathcal{B}(-1)-\mathcal{C}(-1)\mathcal{C}(-1)$, we have
$P_1(1)=(s+1)^2-(-s)^2=2s+1$ and
$P_1(-1)=\left(2r+s+1-(2k_2-2k_1)\right)\left(2t+s+1-(2m_2-2m_1)\right)-(h_2-h_1)^2
=(4k_1+s+1)(4m_1+s+1)-(h_2-h_1)^2$. Therefore,
\begin{eqnarray}
f_{\Gamma_1}(2n)=\begin{cases}
(2s+1)\left(\prod_{j=1}^{\frac{n-1}{2}}\lambda_{j,1}\lambda_{j,2}\right)^2, &{\text{if $n$ is~odd}},\\
(2s+1)\left((4k_1+s+1)(4m_1+s+1)-(h_2-h_1)^2 \right)\left(\prod_{j=1}^{\frac{n}{2}-1}\lambda_{j,1}\lambda_{j,2}\right)^2, &{\text{if $n$ is~even}}.\\
\end{cases}
\nonumber
\end{eqnarray}
Since each algebraic number $\lambda_{j,i}$ comes into both products $\prod_{j=1}^{\frac{n-1}{2}}\lambda_{j,1}\lambda_{j,2}$ and  $\prod_{j=1}^{\frac{n}{2}-1}\lambda_{j,1}\lambda_{j,2}$ with all of its Galois conjugate elements \cite{DL}, we have $c(n)=\prod_{j=1}^{\frac{n-1}{2}}\lambda_{j,1}\lambda_{j,2}$ and $d(n)=\prod_{j=1}^{\frac{n}{2}-1}\lambda_{j,1}\lambda_{j,2}$ are integers.
Let $q_1$ and $\ell_1$ be the square-free parts of $2s+1$ and $(2s+1)((4k_1+s+1)(4m_1+s+1)-(h_2-h_1)^2)$, respectively.
Then $2s+1=q_1u^2$ and $(2s+1)((4k_1+s+1)(4m_1+s+1)-(h_2-h_1)^2)=\ell_1 v^2$ for some ingeters $u,v$.
Setting $a_1(n)=uc(n)$ for odd $n$ and  $b_1(n)=vd(n)$ for even $n$, we
conclude that number $a_1(n)$ and $b_1(n)$ are integers, and the statement of theorem follows.

\medskip

(2) From the proof of Theorem \ref{th1}(2), we have
$$
\lambda_{j,1}\lambda_{j,2}=\left(\mathcal{A}\left(\varepsilon_n^j\right)+1-\V_n^{\frac{jn}{2}}\right) \mathcal{B}\left(\varepsilon_n^{j}\right)-\mathcal{C}\left(\varepsilon_n^{-j}\right) \mathcal{C}\left(\varepsilon_n^{j}\right)
$$
for $j=0,1,\ldots,n-1$. It follows that $\lambda_{0,1}\lambda_{0,2}=\mathcal{A}(1)\mathcal{B}(1)-\mathcal{C}(1)\mathcal{C}(1)=2s+1$.
Since $\lambda_{j,1}\lambda_{j,2}=\lambda_{n-j,1}\lambda_{n-j,2}$, we have $f_{\Gamma_2}(2n)=(2s+1)\lambda_{\frac{n}{2},1}\lambda_{\frac{n}{2},2}\left(\prod_{j=1}^{\frac{n}{2}-1}\lambda_{j,1}\lambda_{j,2}\right)^2$,
where $\lambda_{\frac{n}{2},1}\lambda_{\frac{n}{2},2}=P_2(-1)$ for odd $\frac{n}{2}$,
and $\lambda_{\frac{n}{2},1}\lambda_{\frac{n}{2},2}=P_1(-1)$ for even $\frac{n}{2}$.
Since $P_1(-1)=(4k_1+s+1)(4m_1+s+1)-(h_2-h_1)^2$ and  $P_2(-1)=(4k_1+s+3)(4m_1+s+1)-(h_2-h_1)^2$,
we have
\begin{eqnarray}
f_{\Gamma_2}(2n)=\begin{cases}
(2s+1)\left((4k_1+s+3)(4m_1+s+1)-(h_2-h_1)^2 \right)\left(\prod_{j=1}^{\frac{n}{2}-1}\lambda_{j,1}\lambda_{j,2}\right)^2, &{\text{if $\frac{n}{2}$ is~odd}},\\
(2s+1)\left((4k_1+s+1)(4m_1+s+1)-(h_2-h_1)^2 \right)\left(\prod_{j=1}^{\frac{n}{2}-1}\lambda_{j,1}\lambda_{j,2}\right)^2, &{\text{if $\frac{n}{2}$ is~even}}.\\
\end{cases}
\nonumber
\end{eqnarray}
Since each algebraic number $\lambda_{j,i}$ comes into the product
 $\prod_{j=1}^{\frac{n}{2}-1}\lambda_{j,1}\lambda_{j,2}$ with all of its Galois conjugate elements \cite{DL},
 we have $d(n)=\prod_{j=1}^{\frac{n}{2}-1}\lambda_{j,1}\lambda_{j,2}$ is an integer.
 Let $q_2$ and $\ell_1$ be the square-free parts of $(2s+1)((4k_1+s+3)(4m_1+s+1)-(h_2-h_1)^2)$ and $(2s+1)((4k_1+s+1)(4m_1+s+1)-(h_2-h_1)^2)$, respectively. Then $(2s+1)((4k_1+s+3)(4m_1+s+1)-(h_2-h_1)^2)=q_2u_1^2$ and $(2s+1)((4k_1+s+1)(4m_1+s+1)-(h_2-h_1)^2)=\ell_1 v_1^2$ for some ingeters $u_1,v_1$. Therefore, $f_{\Gamma_2}(2n)=q_2(u_1d(n))^2$ for odd $\frac{n}{2}$, and $f_{\Gamma_2}(2n)=\ell_1(v_1d(n))^2$ for even $\frac{n}{2}$.
 Setting $a_2(n)=u_1d(n)$ and $b_2(n)=v_1d(n)$, the result follows.

\medskip

Along similar lines, we can prove that both $(3)$ and $(4)$ hold.
\end{proof}

 \section{Asymptotics for $f_{\Gamma}(2n)$}
 In this section, we give asymptotic formulas for the number of rooted spanning forests of
bicirculant graphs. Two functions $f(n)$ and $g(n)$ are said to be \textbf{asymptotically equivalent} if $\text{lim}_{n\rightarrow \infty}\frac{f (n)}{g(n)}=1$. We will write $f (n)\sim g(n)$, $n \rightarrow \infty$ in this case.

\begin{theorem}\label{th3}
Let $\Gamma=BC(\mathbb{Z}_n;R,T,S)$ be a bicirculant graph. Then we have
\begin{itemize}
\item[(1)] if $\Gamma=\Gamma_1$, then
$$f_{\Gamma_1}(2n)\sim A^{n},~~n\rightarrow \infty,$$
where $A=\exp \left(\int_0^1 \log \left|P_1\left(e^{2 \pi \mathbf{i} t}\right)\right| d t\right)$ is the Mahler measure of the Laurent polynomial $P_1(z)$;

\item[(2)] if $\Gamma=\Gamma_2$, then
$$f_{\Gamma_2}(2n)\sim B^{\frac{n}{2}},~~n\rightarrow \infty,$$
where $B=\exp \left(\int_0^1 \log \left|P_2\left(e^{2 \pi \mathbf{i} t}\right)P_1\left(e^{2 \pi \mathbf{i} t}\right)\right| d t\right)$ is the Mahler measure of the Laurent polynomial $P_2(z)P_1(z)$;

\item[(3)] if $\Gamma=\Gamma_3$, then
$$f_{\Gamma_3}(2n)\sim C^{\frac{n}{2}},~~n\rightarrow \infty,$$
where $C=\exp \left(\int_0^1 \log \left|P_3\left(e^{2 \pi \mathbf{i} t}\right)P_1\left(e^{2 \pi \mathbf{i} t}\right)\right| d t\right)$ is the Mahler measure of the Laurent polynomial $P_3(z)P_1(z)$;

\item[(4)] if $\Gamma=\Gamma_4$, then
$$f_{\Gamma_4}(2n)\sim D^{\frac{n}{2}},~~n\rightarrow \infty,$$
where $D=\exp \left(\int_0^1 \log \left|P_4\left(e^{2 \pi \mathbf{i} t}\right)P_1\left(e^{2 \pi \mathbf{i} t}\right)\right| d t\right)$ is the Mahler measure of the Laurent polynomial $P_4(z)P_1(z)$.
\end{itemize}
\end{theorem}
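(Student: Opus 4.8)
The plan is to derive the asymptotic behaviour directly from the explicit formulas established in Theorem \ref{th1}, converting the finite products of Chebyshev values into Mahler measures via a limiting argument. The central bridge is the classical identity, following the approach of Mednykh et al., that relates the product $\prod_{\ell=1}^{k}|2T_n(w_\ell)-2|$ over the roots $w_\ell$ of the Chebyshev transform of a palindromic Laurent polynomial $P(z)$ to the $n$-th power of the Mahler measure $M(P)$. Concretely, recalling from the proof of Lemma \ref{ccc} that $|a_k|^n\prod_{\ell=1}^{k}|2T_n(w_\ell)-2|=\prod_{j=0}^{n-1}|P(\V_n^j)|$, I would rewrite each formula in Theorem \ref{th1} purely as a product $\prod_{j=0}^{n-1}|P(\V_n^j)|$ of values of the associated polynomial at the $n$-th (or $\tfrac n2$-th) roots of unity, which is the natural finite Riemann-sum approximation to the defining integral of $M(P)$.

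\medskip

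\textbf{Case (1).} First I would take the logarithm of $f_{\Gamma_1}(2n)=\prod_{j=0}^{n-1}|P_1(\V_n^j)|$, obtaining $\tfrac1n\log f_{\Gamma_1}(2n)=\tfrac1n\sum_{j=0}^{n-1}\log|P_1(e^{2\pi\mathbf{i}\,j/n})|$. This is precisely a Riemann sum for $\int_0^1\log|P_1(e^{2\pi\mathbf{i}t})|\,dt=\log A$, so the expected conclusion is $\tfrac1n\log f_{\Gamma_1}(2n)\to\log A$, i.e. $f_{\Gamma_1}(2n)\sim A^n$. The cases (2)--(4) are handled identically once the products are grouped correctly: from the proof of Theorem \ref{th1}(2) we have $f_{\Gamma_2}(2n)=\prod_{j=0}^{\frac n2-1}P_2(\V_n^{2j+1})\prod_{j=0}^{\frac n2-1}P_1(\V_n^{2j})$, and since the odd powers $\V_n^{2j+1}$ and even powers $\V_n^{2j}=\V_{n/2}^{j}$ together range (with the substitution $t\mapsto t$ on $[0,1]$) over sample points whose logarithmic averages converge to $\int_0^1\log|P_2(e^{2\pi\mathbf{i}t})|\,dt$ and $\int_0^1\log|P_1(e^{2\pi\mathbf{i}t})|\,dt$ respectively, their combination yields the Mahler measure of the product $P_2(z)P_1(z)$. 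Thus $\tfrac{2}{n}\log f_{\Gamma_2}(2n)\to\log B$ with $B=M(P_2P_1)$, giving $f_{\Gamma_2}(2n)\sim B^{n/2}$; cases (3) and (4) follow by the same grouping with $P_3,P_4$ in place of $P_2$.

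\medskip

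\textbf{The hard part} is justifying the convergence of the Riemann sum to the integral, because $\log|P_j(e^{2\pi\mathbf{i}t})|$ may have logarithmic singularities wherever $P_j$ has a root on the unit circle $|z|=1$, so $\log|P_j(e^{2\pi\mathbf{i}t})|$ is not continuous and the naive Riemann-sum argument fails at those points. To handle this I would invoke the standard lemma (as used by Mednykh et al.\ and attributable to the theory of Mahler measures) that for an integer palindromic polynomial $P$, one still has $\lim_{n\to\infty}\tfrac1n\sum_{j=0}^{n-1}\log|P(\V_n^j)|=\int_0^1\log|P(e^{2\pi\mathbf{i}t})|\,dt$; the integral converges since $\log|P(e^{2\pi\mathbf{i}t})|$ is integrable (its only singularities are integrable logarithmic ones), and the discrete sum converges to it by a careful estimate controlling the contribution of roots of unity near the zeros of $P$. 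I would also need the mild nondegeneracy observation that $P_j(\V_n^j)\neq 0$ for the relevant $j$ (equivalently that $I_{2n}+L(\Gamma)$ is nonsingular, which is automatic since $L(\Gamma)$ is positive semidefinite and $I+L$ is positive definite), so that every factor is strictly positive and the logarithm is well defined. Once this convergence lemma is in place, exponentiating the limit of the logarithmic averages yields the four asymptotic equivalences and completes the proof.
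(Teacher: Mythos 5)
Your argument has a genuine gap at its final step: what a Riemann-sum argument yields is convergence of the logarithmic means, $\tfrac1n\log f_{\Gamma_1}(2n)\to\log A$, which is equivalent to $f_{\Gamma_1}(2n)=A^{n+o(n)}$, and this is strictly weaker than the claim of the theorem, $f_{\Gamma_1}(2n)\sim A^n$, which means $f_{\Gamma_1}(2n)/A^n\to 1$. Exponentiating the limit of logarithmic averages only gives $f_{\Gamma_1}(2n)^{1/n}\to A$; for instance $g(n)=n\,A^n$ satisfies $g(n)^{1/n}\to A$ but $g(n)\not\sim A^n$. So ``exponentiating the limit of the logarithmic averages yields the four asymptotic equivalences'' is a non sequitur: to obtain the theorem you must show $\log f_{\Gamma_1}(2n)-n\log A\to 0$, i.e.\ that the quadrature error of the equally-spaced sum, \emph{multiplied by $n$}, tends to zero, and nothing in your write-up addresses this.

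The ingredient that makes the strong asymptotic true --- and which the paper establishes first --- is that $P_1$ has \emph{no zeros on the unit circle}: writing $P_1(e^{\mathbf{i}\theta})=\mathcal{A}(e^{\mathbf{i}\theta})\mathcal{B}(e^{\mathbf{i}\theta})-|\mathcal{C}(e^{\mathbf{i}\theta})|^2$ and using the elementary bounds $\mathcal{A}(e^{\mathbf{i}\theta})\ge s+1$, $\mathcal{B}(e^{\mathbf{i}\theta})\ge s+1$ and $|\mathcal{C}(e^{\mathbf{i}\theta})|^2\le s^2$, one gets $P_1(e^{\mathbf{i}\theta})\ge 2s+1>0$. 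The paper then argues factor by factor in the formula of Theorem \ref{th1}: each root pair $z_\ell,z_\ell^{-1}$ of $P_1$ can be normalized so that $|z_\ell|>1$, whence $|2T_n(w_\ell)-2|=|z_\ell^{\,n}+z_\ell^{-n}-2|\sim|z_\ell|^n$, and since the number $k$ of factors is fixed, $f_{\Gamma_1}(2n)\sim|a_k|^n\prod_{\ell=1}^{k}|z_\ell|^n=M(P_1)^n$, the Mahler measure entering through Lehmer's product form (2) rather than through the integral. Your proposal instead tolerates zeros of $P_j$ on the circle and tries to absorb them into a ``standard lemma'' on convergence of the averages; this cannot work, because if $P_1$ had a zero $e^{\mathbf{i}\phi}$ on the circle, the corresponding Chebyshev factor would equal $|2\cos(n\phi)-2|$, which oscillates and admits no asymptotic of the form $c^n$ with ratio tending to $1$ --- the strong statement itself would generally be false in that situation, so no convergence lemma for the means can rescue it. (Your positive-definiteness remark only shows $P_1(\V_n^j)>0$ at the $n$-th roots of unity, not on the whole circle, so it does not substitute for this step.) Once circle zeros are excluded, your route could in fact be repaired, since $\log|P_1(e^{2\pi\mathbf{i}t})|$ is then analytic and periodic and the equally-spaced quadrature error is exponentially small, hence $o(1/n)$; but that exclusion is precisely the lemma your proposal omits.
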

\begin{proof}
We only give the proof for the case $\Gamma=\Gamma_1$, the other cases are similar.
By Theorem \ref{th1}, we obtain
$$
f_{\Gamma_1}(2n)=|a_k|^n\prod_{\ell=1}^{k}|2T_n(w_\ell)-2|,
$$
where $w_\ell=\frac{1}{2}(z_\ell+z_\ell^{-1})$, and $z_\ell,z_\ell^{-1}, \ell=1,2,\ldots,k,$ are all the roots of $P_1(z)=0$.
Let $\theta\in \mathbb{R}$. Then we have
$$P_1(e^{\textbf{i}\theta})=\mathcal{A}(e^{\mathbf{i}\theta}) \mathcal{B}(e^{\mathbf{i}\theta})-\mathcal{C}(e^{\mathbf{i}\theta}) \mathcal{C}(e^{-\mathbf{i}\theta})=\mathcal{A}(e^{\mathbf{i}\theta}) \mathcal{B}(e^{-\mathbf{i}\theta})-|\mathcal{C}(e^{\mathbf{i}\theta})|^2.$$
Since $\mathcal{A}(z)=2r+s+1-\sum_{j=1}^r\left(z^{\a_j}+z^{-\a_j}\right)$ and $
\mathcal{B}(z)=2t+s+1-\sum_{j=1}^t\left(z^{\b_j}+z^{-\b_j}\right)$ and $\mathcal{C}(z)=-\sum_{j=1}^s z^{\gamma_j}$,
we have
$$
\begin{aligned}
\mathcal{A}(e^{\mathbf{i}\theta})
&=2r+s+1-\sum_{j=1}^r \left(e^{\mathbf{i}\theta\a_j}+e^{-\mathbf{i}\theta\a_j}\right)
=s+1+2\sum_{j=1}^r(1-\cos(\theta \a_j))\ge s+1,\\
\mathcal{B}(e^{\mathbf{i}\theta})
&=2t+s+1-\sum_{j=1}^t \left(e^{\mathbf{i}\theta\b_j}+e^{-\mathbf{i}\theta\b_j}\right)
=s+1+2\sum_{j=1}^t(1-\cos(\theta \b_j))\ge s+1,\\
|\mathcal{C}(e^{\mathbf{i}\theta})|^2&=|\sum_{j=1}^s {e^{\mathbf{i}\theta\gamma_j}}|^2=(\sum_{j=1}^s\cos(\theta\gamma_j))^2+(\sum_{j=1}^s\sin(\theta\gamma_j))^2
=s+2\sum_{1\le j<k\le s}\cos((\gamma_j-\gamma_k)\theta) \le s^2.
\end{aligned}
$$
Therefore, we obtain
$$
P_1(e^{\textbf{i}\theta})=\mathcal{A}(e^{\mathbf{i}\theta}) \mathcal{B}(e^{\mathbf{i}\theta})-|\mathcal{C}(e^{\mathbf{i}\theta})|^2
\ge (s+1)^2-s^2=2s+1>0.
$$
Hence $|z_\ell|\not=1$ for $\ell=1,2,\ldots,k$. Replacing $z_\ell$ by $z_\ell^{-1}$ if necessary, we can assume that
$|z_{\ell}|>1$ for all $\ell = 1, 2,\ldots, k$.
Since $T_n(\frac{1}{2}(z_{\ell}+z_{\ell}^{-1}))=\frac{1}{2}(z_{\ell}^n+z_{\ell}^{-n})$, we have
$$T_n(w_\ell)\sim \frac{1}{2}z_\ell^n, \quad |2T_n(w_\ell)-2|\sim |z_{\ell}|^n, ~~n\rightarrow \infty.$$
Then we have
$$
|a_k|^n\prod_{\ell=1}^k\left|2T_n\left(w_\ell\right)-2\right|\sim |a_k|^n\prod_{\ell=1}^k|z_{\ell}|^n
=|a_k|^n\prod_{\substack{P_1(z)=0,\\|z|>1}}|z|^n=A^{n},~~n\rightarrow \infty,
$$
where $A=|a_k|\prod_{P_1(z)=0,|z|>1}|z| $ is the Mahler measure of $P_1(z)$. By (1) and (2), we have
$A=\exp \left(\int_0^1 \log \left|P_1\left(e^{2 \pi \mathbf{i} t}\right)\right| d t\right)$.

Finally,
$$
f_{\Gamma_1}(2n)=|a_k|^n\prod_{\ell=1}^k\left|2 T_n\left(w_\ell\right)-2\right|\sim A^{n}, ~~n\rightarrow \infty.
$$
This completes the proof.
\end{proof}

\section{Examples}
In this section, we give some examples to illustrate our results.

\medskip

\textbf{(1)} \underline{The graph $\Gamma=BC(\mathbb{Z}_n;\{1,-1\},\emptyset,\{0\})$}. In this case, we have $\Gamma=\Gamma_1$. Then $\mathcal{A}(z)=4-(z+z^{-1})$,
$\mathcal{B}(z)=2$ and $\mathcal{C}(z)=-1$. Then $P_1(z)=\mathcal{A}(z)\mathcal{B}(z)-\mathcal{C}(z)\mathcal{C}(z^{-1})=7-2(z+z^{-1})$.
Hence $P_1(z)=0 \Rightarrow \frac{1}{2}(z+z^{-1})=\frac{7}{4}$.

\medskip

(1.1) The number of rooted spanning forests. By Theorem \ref{th1}(1), we have
$$
f_{\Gamma_1}(2n)=2^n\left|2 T_n\left(\frac{7}{4}\right)-2\right|.
$$

\medskip

(1.2) The arithmetic properties of $f_{\Gamma_1}(2n)$.
By Theorem \ref{th2}(1), there exist two integer sequences $a_1(n)$ and $b_1(n)$ such that
\begin{eqnarray}
f_{\Gamma_1}(2n)=\begin{cases}
3a_1(n)^2, &{\text{if $n$ is~odd}},\\
33b_1(n)^2 , &{\text{if $n$ is~even}}.\\
\end{cases}
\nonumber
\end{eqnarray}

\medskip

(1.3) The asymptotics of $f_{\Gamma_1}(2n)$. By Theorem \ref{th3}(1), we have $f_{\Gamma_1}(2n)\sim A^n$, where $A=\frac{1}{2}(7+\sqrt{33})$.

\medskip

\textbf{(2)} \underline{The graph $\Gamma=BC(\mathbb{Z}_n;\{1,-1,\frac{n}{2}\},\emptyset,\{0\})$}. In this case, we have $\Gamma=\Gamma_2$. Then $\mathcal{A}(z)=4-(z+z^{-1})$,
$\mathcal{B}(z)=2$ and $\mathcal{C}(z)=-1$. Then $P_2(z)=(\mathcal{A}(z)+2)\mathcal{B}(z)-\mathcal{C}(z)\mathcal{C}(z^{-1})=11-2(z+z^{-1})$,
$P_1(z)=\mathcal{A}(z)\mathcal{B}(z)-\mathcal{C}(z)\mathcal{C}(z^{-1})=7-2(z+z^{-1})$. Hence $P_2(z)=0$ and $P_1(z)=0$ $ \Rightarrow \frac{1}{2}(z+z^{-1})=\frac{11}{4}$ or $\frac{7}{4}$.

\medskip

(2.1) The number of rooted spanning forests. By Theorem \ref{th1}(2), we have
$$
f_{\Gamma_2}(2n)=2^n\left|2 T_\frac{n}{2}\left(\frac{11}{4}\right)+2\right|\left|2 T_\frac{n}{2}\left(\frac{7}{4}\right)-2\right|.
$$

\medskip

(2.2) The arithmetic properties of $f_{\Gamma_2}(2n)$.
By Theorem \ref{th2}(2), there exist two integer sequences $a_2(n)$ and $b_2(n)$ such that
$$
f_{\Gamma_2}(2n)=\begin{cases}
5a_2(n)^2, &{\text{if $\frac{n}{2}$ is~odd}},\\
33b_2(n)^2 , &{\text{if $\frac{n}{2}$ is~even}}.\\
\end{cases}
$$

\medskip

(2.3) The asymptotics of $f_{\Gamma_2}(2n)$. By Theorem \ref{th3}(2), we have $f_{\Gamma_2}(2n)\sim B^\frac{n}{2}$, where $B=\frac{1}{4}(7+\sqrt{33})(11+\sqrt{105})$.

\medskip

\textbf{(3)} \underline{The graph $\Gamma=BC(\mathbb{Z}_n;\{1,-1\},\{\frac{n}{2}\},\{0\})$}. In this case, we have $\Gamma=\Gamma_3$. Then $\mathcal{A}(z)=4-(z+z^{-1})$,
$\mathcal{B}(z)=2$ and $\mathcal{C}(z)=-1$. Then $P_3(z)=\mathcal{A}(z)(\mathcal{B}(z)+2)-\mathcal{C}(z)\mathcal{C}(z^{-1})=15-4(z+z^{-1})$,
$P_1(z)=\mathcal{A}(z)\mathcal{B}(z)-\mathcal{C}(z)\mathcal{C}(z^{-1})=7-2(z+z^{-1})$. Hence $P_3(z)=0$ and $P_1(z)=0$ $ \Rightarrow \frac{1}{2}(z+z^{-1})=\frac{15}{8}$ or $\frac{7}{4}$.

\medskip

(3.1) The number of rooted spanning forests. By Theorem \ref{th1}(3), we have
$$
f_{\Gamma_3}(2n)=2^\frac{3n}{2}\left|2 T_\frac{n}{2}\left(\frac{15}{8}\right)+2\right|\left|2 T_\frac{n}{2}\left(\frac{7}{4}\right)-2\right|.
$$

\medskip

(3.2) The arithmetic properties of $f_{\Gamma_3}(2n)$.
By Theorem \ref{th2}(3), there exist two integer sequences $a_3(n)$ and $b_3(n)$ such that
$$
f_{\Gamma_3}(2n)=\begin{cases}
69a_3(n)^2, &{\text{if $\frac{n}{2}$ is~odd}},\\
33b_3(n)^2 , &{\text{if $\frac{n}{2}$ is~even}}.\\
\end{cases}
$$

\medskip

(3.3) The asymptotics of $f_{\Gamma_3}(2n)$. By Theorem \ref{th3}(3), we have $f_{\Gamma_3}(2n)\sim C^\frac{n}{2}$, where $C=\frac{1}{4}(7+\sqrt{33})(15+\sqrt{161})$.

\medskip

\textbf{(4)} \underline{The graph  $\Gamma=BC(\mathbb{Z}_n;\{1,-1,\frac{n}{2}\},\{\frac{n}{2}\},\{0\})$}. In this case, we have $\Gamma=\Gamma_4$. Then $\mathcal{A}(z)=4-(z+z^{-1})$,
$\mathcal{B}(z)=2$ and $\mathcal{C}(z)=-1$. Then $P_4(z)=(\mathcal{A}(z)+2)(\mathcal{B}(z)+2)-\mathcal{C}(z)\mathcal{C}(z^{-1})=23-4(z+z^{-1})$,
$P_1(z)=\mathcal{A}(z)\mathcal{B}(z)-\mathcal{C}(z)\mathcal{C}(z^{-1})=7-2(z+z^{-1})$. Hence $P_4(z)=0$ and $P_1(z)=0$ $ \Rightarrow \frac{1}{2}(z+z^{-1})=\frac{23}{8}$ or $\frac{7}{4}$.

\medskip

(4.1) The number of rooted spanning forests. By Theorem \ref{th1}(4), we have
$$
f_{\Gamma_4}(2n)=2^\frac{3n}{2}\left|2 T_\frac{n}{2}\left(\frac{23}{8}\right)+2\right|\left|2 T_\frac{n}{2}\left(\frac{7}{4}\right)-2\right|.
$$

\medskip

(4.2) The arithmetic properties of $f_{\Gamma_4}(2n)$.
By Theorem \ref{th2}(4), there exist two integer sequences $a_4(n)$ and $b_4(n)$ such that
$$
f_{\Gamma_4}(2n)=\begin{cases}
93a_4(n)^2, &{\text{if $\frac{n}{2}$ is~odd}},\\
33b_4(n)^2 , &{\text{if $\frac{n}{2}$ is~even}}.\\
\end{cases}
$$

\medskip

(4.3) The asymptotics of $f_{\Gamma_4}(2n)$. By Theorem \ref{th3}(4), we have $f_{\Gamma_4}(2n)\sim D^\frac{n}{2}$, where $D=\frac{1}{4}(7+\sqrt{33})(23+\sqrt{465})$.

\medskip
At last, we give two examples
on Cayley graphs over dihedral groups.
Let $\mathrm{D}_{2n}=\l a,b \mid a^n=b^2=e, bab=a^{-1}\r$ be the dihedral group.
Note that Cayley graphs over dihedral groups can be seen as bi-Cayley graphs over cyclic groups.
Our results are also effective to deal with such kind of graphs.

\medskip

\textbf{(5)} \underline{The graph  $\Gamma=BC\{\mathbb{Z}_n;\{1,-1\},\{1,-1\},\{0\}\}$}. In this case, $\Gamma=\Gamma_1\cong \Cay(\D_{2n},\{a,a^{-1},b\})$.  Then we have
$\mathcal{A}(z)=\mathcal{B}(z)=4-(z+z^{-1})$ and $\mathcal{C}(z)=-1$. Hence $P_1(z)=\mathcal{A}(z) \mathcal{B}\left(z\right)-\mathcal{C}(z^{-1}) \mathcal{C}\left(z\right)=(4-(z+z^{-1}))^2-1$. Then $P_1(z)=0 \Rightarrow \frac{1}{2}(z+z^{-1})=\frac{3}{2}$ or $\frac{5}{2}$.

\medskip

(5.1) The number of rooted spanning forests. By Theorem \ref{th1}(1), we have
$$
f_{\Gamma_1}(2n)=\left|2 T_n\left(\frac{3}{2}\right)-2\right| \left|2 T_n\left(\frac{5}{2}\right)-2\right|.
$$

\medskip

(5.2) The arithmetic properties of $f_{\Gamma_1}(2n)$.
By Theorem \ref{th2}(1), there exist two integer sequences $a_1(n)$ and $b_1(n)$ such that
$$
f_{\Gamma_1}(2n)=\begin{cases}
3a_1(n)^2, &{\text{if $n$ is~odd}},\\[1mm]
105b_1(n)^2 , &{\text{if $n$ is~even}}.
\end{cases}
$$

\medskip

(5.3) The asymptotics of $f_{\Gamma_1}(2n)$. By Theorem \ref{th3}(1), we have $f_{\Gamma_1}(2n)\sim A^n$, where $A=\frac{1}{4}(3+\sqrt{5})(5+\sqrt{21})$.

\medskip

\textbf{(6)} \underline{The graph  $\Gamma=BC\{\mathbb{Z}_n;\{1,-1,\frac{n}{2}\},\{1,-1,\frac{n}{2}\},\{0\}\}$}. In this case, we have $\Gamma=\Gamma_4\cong\Cay(\D_{2n},\{a,a^{-1}, a^{\frac{n}{2}}, b\})$. Then
$\mathcal{A}(z)=\mathcal{B}(z)=4-(z+z^{-1})$ and $\mathcal{C}(z)=-1$. It follows that $P_1(z)=(4-(z+z^{-1}))^2-1$ and
$P_4(z)=(6-(z+z^{-1}))^2-1$. Hence $P_1(z)=0$  and $P_4(z)=0 \Rightarrow $
$\frac{1}{2}(z+z^{-1})=\frac{3}{2}, \frac{5}{2}$ or $\frac{1}{2}(z+z^{-1})=\frac{5}{2}, \frac{7}{2}$.

\medskip

(6.1) The number of rooted spanning forests. By Theorem \ref{th1}(4), we have
$$
f_{\Gamma_4}(2n)=\left|2T_{\frac{n}{2}}\left(\frac{5}{2}\right)+2\right|\left|2T_{\frac{n}{2}}\left(\frac{7}{2}\right)+2\right|
\left|2T_{\frac{n}{2}}\left(\frac{3}{2}\right)-2\right|\left|2T_{\frac{n}{2}}\left(\frac{5}{2}\right)-
2\right|.
$$

\medskip

(6.2) The arithmetic properties of $f_{\Gamma_4}(2n)$.
By Theorem \ref{th2}(4), there exist two integer sequences $a_4(n)$ and $b_4(n)$ such that
$$
f_{\Gamma_4}(2n)=\begin{cases}
21a_4(n)^2, &{\text{if $\frac{n}{2}$ is~odd}},\\[1mm]
105b_4(n)^2 , &{\text{if $\frac{n}{2}$ is~even}}.
\end{cases}
$$

\medskip

(6.3) The asymptotics of $f_{\Gamma_4}(2n)$. By Theorem \ref{th3}(4), we have $f_{\Gamma_4}(2n)\sim D^\frac{n}{2}$, where
$D=\frac{1}{16}(3+\sqrt{5})(5+\sqrt{21})^2(7+3\sqrt{5})$.

\section*{Declaration of competing interests}
We declare that we have no conflict of interests to this work.

\section*{Data availability}
No data was used for the research described in the article.



\begin{thebibliography}{99}

\bibitem{AlmostM1}
T.L. Austin, The enumeration of point labelled chromatic graphs and trees, Canad. J. Math., 12(1960) 535--545.

\bibitem{prism}
F.T. Boesch, Z.R. Bogdanowicz, The number of spanning tress in a prism, Internat. J. Comput. Math., 21(1987) 229--243.

\bibitem{wheel}
F.T. Boesch, H. Prodinger, Spanning tree formulas and Chebyshev polynomials, Graphs Combin., 2(1986) 191--200.

\bibitem{AlmostM2}
A. Cayley, A theorem on trees, Quart. J. Pure Appl. Math., 23(1889) 376--378.

\bibitem{sie1}
S.C. Chang, L.C. Chen, W.S. Yang, Spanning trees on the Sierpinski gasket, J. Stat. Phys., 126(2007) 649--667.

\bibitem{PE}
P. Chebotarev, E. Shamis, Matrix forest theorems, 2006, \url{arXiv:math/0602575}.

\bibitem{cir2}
X.B. Chen, Q.Y. Lin, F.J. Zhang, The number of spanning trees in odd valent circulant graphs, Discrete Math., 282(2004) 69--79.

\bibitem{sie2}
D. Dangeli, A. Donno, Weighted spanning trees on some self-similar graphs, Electron. J. Combin., 181(2011) 16--43.

\bibitem{PJ}
P.J. Davis, Circulant Matrices, 1994, AMS Chelsea Publishing.

\bibitem{GX2}
X. Gao, W.W. Liu, Y.F. Luo, On the extendability of certain semi-Cayley graphs of finite abelian groups, Discrete Math., 311 (2011) 1978--1987.

\bibitem{GX}
X. Gao, Y.F. Luo, The spectrum of semi-Cayley graphs over abelian groups, Linear Algebra Appl., 432(2010) 2974--2983.


\bibitem{MMA3}
L.A. Grunwald, Y.S. Kwon, I.A. Mednykh, Counting rooted spanning forests for circulant foliation over a graph, Tohoku Math. J., 4(2022) 535--548.

\bibitem{AJ}
A.J. Guttmann, M.D. Rogers, Spanning tree generating functions and Mahler measures, J. Phys. A., 45(2012) 494001.

\bibitem{fan}
A.J.W. Hilton, Spanning trees and Fibonacci and Lucas numbers, Fibonacci Q., 12(1974) 259--262.

\bibitem{hua}
B.B. Hua, A.D. Mednykh, I. A. Mednykh, L.L. Wang, On the complexity of Cayley graphs on a dihedral group, 2023, \url{arXiv:2312.16447}.

\bibitem{yc}
Y.L. Jin, C.L. Lin, Enumeration for spanning forests of complete bipartite graphs, Ars Combin., 70(2004) 135--138.

\bibitem{AK}
A.K. Kelmans, V.M. Chelnokov, A certain polynomial of a graph and graphs with an extremal number of trees, J. Combin. Theory, Ser. B, 16(1974) 197--214.

\bibitem{OO}
O. Knill, Counting rooted forests in a network, Preprint, 2013, \url{arXiv:1307.3810}.

\bibitem{OO1}
Y.S. Kwon, A.D. Mednykh, I.A. Mednykh, Complexity of the circulant foliation over a graph, J. Algebraic Comb., 53(2021) 115--129.

\bibitem{YA}
Y.S. Kwon, A.D. Mednykh, I.A. Mednykh, On Jacobian group and complexity of the generalized Petersen graph $GP(n,k)$ through Chebyshev
polynomials, Linear Algebra Appl., 529(2017) 355--373.

\bibitem{leh}
D.H. Lehmer, Factorization of certain cyclotomic functions, Ann. of Math., 34(1933) 461--479.

\bibitem{sem}
K.H. Leung, S.L. Ma, Partial difference triples, J. Algebraic Combin., 2(1993) 397--40.

\bibitem{DL}
D. Lorenzini, Smith normal form and Laplacians, J. Combin. Theory Ser. B., 98(2008) 1271--1300.

\bibitem{mah}
K. Mahler, On some inequalities for polynomials in several variables, J. London Math. Soc., 37(1962) 341--344.

\bibitem{III}
I.A. Mednykh, On Jacobian group and complexity of $I$-graph $I(n, k, l)$ through Chebyshev polynomials, Ars Math. Contemp., 15(2018) 467--485.

\bibitem{OO3}
A.D. Mednykh, I.A. Mednykh, Complexity of circulant graphs with non-fixed jumps, its arithmetic properties and asymptotics,
Ars Math. Contemp., 23(2023) \# P1.08.

\bibitem{med}
A.D. Mednykh, I.A. Mednykh, On rationality of generating function for the number of spanning trees in circulant graphs, Algebra Colloq., 27(2020) 87--94.

\bibitem{MMA1}
A.D. Mednykh, I.A. Mednykh, The number of spanning trees in circulant graphs, its arithmetic properties and asymptotic, Discrete Math.,
342(2019) 1772--1781.

\bibitem{MMA2}
A.D. Mednykh, I.A. Mednykh, The number of rooted forests in circulant graphs, Ars Math. Contemp., 22(2022) \# P4.10.


\bibitem{grid}
S.D. Nikolopoulos, C. Papadopoulos, The number of spanning trees in $K_n$-complements of quasi-threshold graphs, Graphs Combin., 20(2004) 383--397.


\bibitem{cay}
M.J.D. Resmi, D. Jungnickel, Strongly regular semi-Cayley graphs, J. Algebraic Combin., 1(1992) 171--195.


\bibitem{ati}
A. Schwenk, Computing the characteristic polynomial of a graph, in: Graphs and Combinatorics, in: Lecture Notes in Mathematics, vol. 406,
Springer-Verlag, Berlin-Heidelberg-New York, 1974, pp. 153--172.

\bibitem{ladder}
J. Sedl\'ac\v ek, On the spanning trees of finite graphs, \v Cas. P\v estov\'in\'i Mat., 94(1969) 217--221.

\bibitem{ladder1}
J. Sedl\'ac\v ek, On the skeletons of a graph or digraph, in: R. Guy, M. Hanani, N. Saver, J. Schonheim (Eds.), Combinatorial Structures and Their Applications, Gordon and Breach, New York, 1970, pp. 387--391.


\bibitem{latt}
R. Shrock, F.Y. Wu, Spanning trees on graphs and lattices in $d$-dimensions, J. Phys. A., 33(2000) 3881--3902.

\bibitem{ati1}
W.G. Sun, S. Wang, J.Y. Zhang, Counting spanning trees in prism and anti-prism graphs, J. Appl. Anal. Comput., 6(2016) 65--75.

\bibitem{M1}
L. Weinberg, Number of trees in graph, Proc. IRE, 46(1958) 1954--1955.

\bibitem{cir4}
Y.P. Zhang, X.R. Yong, M.J. Golin, Chebyshev polynomials and spanning tree formulas for circulant and related graphs, Discrete Math., 298(2005) 334--364.

\bibitem{cir3}
Y.P. Zhang, X.R. Yong, M.J. Golin, The number of spanning trees in circulant graphs, Discrete Math., 223(2000) 337--350.

\end{thebibliography}
 \end{document}